 \newcommand{\R}{\mathbb{R}}
 \newcommand{\Z}{\mathbb{Z}}
\newcommand{\Q}{\mathbb{Q}}
\DeclareMathOperator{\uncovOp}{uncov}
\newcommand{\uncov}{\uncovOp}
\newcommand{\size}[1]{\ensuremath{\left|#1\right|}}
\newcommand\restr[2]{{
  \left.\kern-\nulldelimiterspace 
  #1 
  \vphantom{\big|} 
  \right|_{#2} 
  }}
\newenvironment{proof}{{\bf Proof:  }}{\hfill\rule{2mm}{2mm}}
\numberwithin{equation}{section} 
\numberwithin{figure}{section} 
\numberwithin{table}{section} 
\newtheorem{fact}{Fact}[section]
\newtheorem{lemma}[fact]{Lemma}
\newtheorem{theorem}[fact]{Theorem}
\newtheorem{corollary}[fact]{Corollary}
\begin{document}
\title{Optimized Cranial Bandeau Remodeling \thanks{The authors were supported by the NSERC Discovery Grant program and acknowledge the support of the Natural Sciences and Engineering Research Council of Canada (NSERC).}}
%
%
\author[1]{James Drake}
\author[2]{Marina Drygala}
\author[2]{Ricardo Fukasawa} 
\author[2]{Jochen Koenemann}
\author[4]{Andr\'e Linhares}
\author[1]{Thomas Looi}  
\author[1]{John Phillips}  
\author[2]{David Qian} 
\author[3]{Nikoo Saber} 
\author[1,*]{Justin Toth} 
\author[1]{Chris Woodbeck} 
\author[1]{Jessie Yeung}

\affil[1]{The Hospital for Sick Children, Toronto, Ontario} 
\affil[2]{University of Waterloo, Waterloo, Ontario}
\affil[3]{MyJiko, Toronto, Ontario}
\affil[4]{University of Waterloo, Waterloo, Ontario. Now at Google Research}
\affil[*]{Corresponding Author: Justin Toth, \href{mailto:wjtoth@uwaterloo.ca}{wjtoth@uwaterloo.ca}}
\setcounter{Maxaffil}{0}
\renewcommand\Affilfont{\itshape\small}
%
%

%

\maketitle              
\begin{abstract}
    {\em Craniosynostosis}, a condition affecting $1$ in $2000$
    infants, is caused by premature fusing of cranial vault sutures, and manifests itself in abnormal skull growth patterns. Left untreated, the condition may lead to severe developmental impairment. Standard practice is to apply corrective cranial bandeau remodeling surgery in the first year of the infant’s life. The most frequent type of surgery involves the removal of the so-called {\em fronto-orbital bar} from the patient's forehead and the cutting of well-placed incisions to reshape the skull in order to obtain the desired result.
  
    In this paper, we propose a precise optimization model for the above cranial bandeau remodeling problem and its variants. We have developed efficient algorithms that solve best incision placement, and show hardness for more general cases in the class. To the best of our knowledge this paper is the first to introduce optimization models for craniofacial surgery applications.
    
    {\small \textbf{Keywords:} OR in medicine, Dynamic programming, Combinatorial optimization, Surgical planning}
\end{abstract}

\section{Introduction}

While adult skulls are a single solid piece of bone, infant skulls are in fact
composed of several bone pieces that start to fuse at the joints ({\em cranial
vault sutures})
as the infant grows. However, when these sutures fuse prematurely, it leads to a condition called “craniosynostosis”.

The purpose of this work
is to study optimization models arising from the treatment of
{craniosynostosis}, which affects 1 in 2000 infants \cite{SL+08}.
Craniosynostosis often yields abnormal growth patterns of the infant’s skull, ultimately resulting in severe deformations (see Figure~\ref{fig:intro}.(a)).
Left untreated, it may also lead to increased intracranial pressure, and this in turn may cause stunted mental growth,
vision impairment and several other severe impairments of the patient. That said, the primary reason for treatment is psychosocial concerns later in life from the abnormal skull shape. To avoid such problems, surgical treatment of craniosynostosis is often applied within the first year of an infant’s life~\cite{PU03}.

\begin{figure}
  \begin{center}
    \includegraphics[width=.95\textwidth]{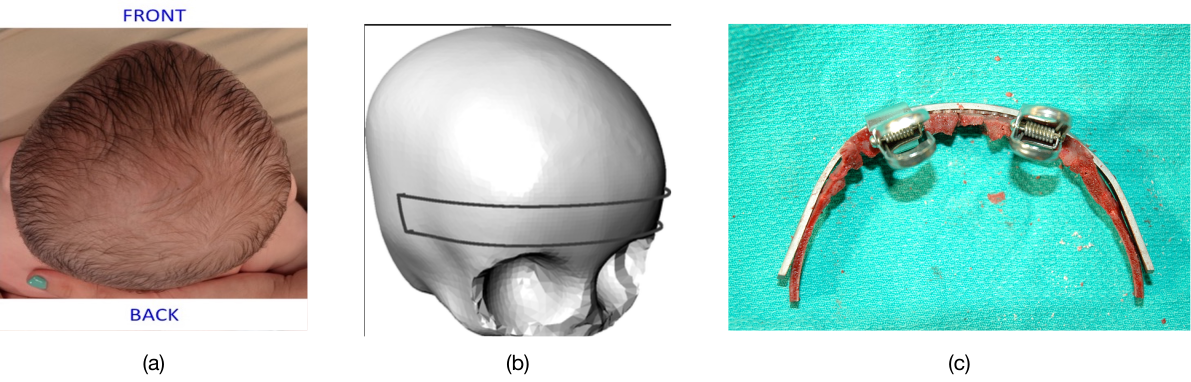}
  \end{center}
  \caption{\label{fig:intro} (a) the skull of a craniosynostosis patient ~\cite{wikicranio},
    (b) the schematics of a bandeau~\cite{SP+12}, and (c) a bandeau attached to metal template~\cite{ik+15}}
\end{figure}


To better understand the optimization problem that arises in such surgical treatment, we first focus on one of the methods to treat the condition. A central component in such surgeries is the reshaping of the so-called {\em fronto-orbital bar}, or \emph{bandeau}, which is a strip of bone from the patient’s forehead above the eyebrows (see Figure~\ref{fig:intro}.(b)). The surgeon removes the bandeau from the patient’s skull, then performs several well-placed incisions (`kerfs'~\cite{anantheswar2009pediatric}) into this piece of bone in order to make it more malleable. After kerfing the bone and the bandeau is fit perfectly into the template fixation is achieved using bioresorbable plates and screws.( see Figure~\ref{fig:intro}(c)).
The reshaped bandeau is then inserted back into the patient’s forehead. Note that in the bandeau procedure, the pieces relative position to each other is preserved.
A second reshaping procedure is one over the {\em full vault}, where larger pieces of the whole cranium are cut and may be rearranged and put into completely different positions, so that the original arrangement and  orientation of the pieces need not be preserved.


The purpose of these reshaping procedures is to make the bone look closer to what is considered an ideal shape. Thus, one important part of such problems is the determination of the ideal skull shape. Recently, a team of surgeons and researchers at the Hospital for Sick Children developed a library of normative pediatric skulls modelling {\em ideal} skull shapes of infants of varying ages~\cite{SP+12}. Based on these skulls, the hospital now has such steel skull templates and the surgeon chooses the template that is most appropriate to each individual craniosynostosis patient, based on the patient’s age and size. The availability of such individual steel templates during operation has led to a more standardized, objective and precise correction of craniosynostosis~\cite{SP+12}.

A key factor in adequately approximating the ideal skull shape is to strategically place the kerf 
  points on the deformed bandeau. This important part of the surgery is currently performed based on \emph{ad hoc} methods, using the surgeon’s experience, intuition and talent. Obviously, this means that the quality of the outcome of this procedure depends integrally on the surgeon’s experience and skill. Another major drawback is the current difficulty in training new surgeons, and ensuring standardized and consistent surgical outcomes.

In this paper we focus on the surgeon’s task of finding locations for the fronto-orbital bar incisions, the {\em bandeau reshaping problem (BRP)}. We define a formal mathematical problem, the \emph{curve refitting problem} (\ref{prob:cr}), which models this task and allows us to evaluate the quality of an operation's outcome. We present an efficient algorithm for this problem, and show hardness results for a variant of it. In particular, the variant studied is motivated by the full-vault surgery and implies  a hardness result in the full-vault problem. To the best of our knowledge, our work presents the first formal injection of optimization methods into the realm of plastic craniofacial surgery.

\subsection{Related Work}

The use of optimization in healthcare has received increased attention in
recent years. The range of applications has gone from staff and facilities scheduling
(e.g.~\cite{bc+04,cdb10}) and outpatient scheduling~\cite{cayirli2003outpatient} to cancer radiation treatment~\cite{ly+13,frimodig2019models}.
However, to the best of our knowledge, this is the first work that applies an
optimization approach to a
problem that deals with bone
reconstruction.
Indeed, most of the quantitative work in craniosynostosis treatment has focused on measuring the severity of cases~\cite{rs+06,ck+16}.

There have also been some similar problems studied in different contexts. For instance, \cite{ls02} looks at the problem of reconstructing an artifact based on its fragments. However, such work is based on reconstructing a previously known artifact based on how similar the edges of the broken pieces are, which does not apply in our case.
In addition, there are works on computational geometry that study how to best match two sets of points, like the Iterative Closest Point problem~\cite{cv+02} and the Procrustes problem~\cite{gd04}. However these problems only allow some very well-defined uniform transformations (e.g. scaling, rotating) to be applied to the set of points and thus cannot be applied in our setting, where one seeks to maintain some parts of the skull intact, while transforming (rotating) other parts, which are obtained after the incisions are done, independently from the remainder of the skull.

Therefore, none of the previous literature found would apply in solving the desired (BRP)  and thus, there is a need to develop new methodology to solve this problem.

\subsection{Our Contributions and Outline}

We consider the main contribution of our work to be the introduction of formal discrete optimization methods for craniofacial plastic surgery. Moreover, to the best of our knowledge we are the first to provide a formal abstract model (\ref{prob:cr}) of the craniofacial surgical problem of reshaping the fronto-orbital bar, which is presented in Section \ref{sec:compmodels}. We then present an exact algorithm for a practically important special case of the problem based on dynamic programming in Section \ref{sec:dp}, along with the presentation of an efficient implementation of the algorithm and empirical evaluation in Section \ref{sec:comp-exper}. Lastly, we prove in Section \ref{sec:hardness} that the general form of the problem is computationally hard.

\section{Computational Models}\label{sec:compmodels}

\paragraph{}
In this section we present a mathematical model of the Bandeau Reshaping Problem (BRP). The model is very general, accommodating many different cost functions for measuring dissimilarity between the ideal and deformed curves. It also models surgeries in which the pieces of the deformed curve can be rearranged. This is not done in most (BRP) cases in practice, but is studied as a first step in trying to solve the more general full-vault problem. In Subsection~\ref{sec:without-rearrangement} we discuss a specialization of our model which does not admit rearrangement, and is thus more indicative of the bandeau surgery.

\subsection{Curve Reshaping Problem}\label{sec:formulation}
In a Curve Reshaping Problem (\ref{prob:cr}) we are given, as inputs, two piecewise linear continuous functions\footnote{The requirement that our curves are functions is not necessary for our algorithm but greatly simplifies notation.} (henceforth referred to as \emph{curves}) of the form $f:[0,s]\rightarrow \R$ and $g:[0,t]\rightarrow \R$. The curves are used to represent bandeaus from a top-down perspective, with curve $f$ for the deformed bandeau and curve $g$ for the ideal bandeau. When presenting a curve $\eta:[0,u] \rightarrow \R$, we give a set of points $D\subseteq [0,u]$ to the algorithm as input, such that $\eta$ is a piecewise linear interpolation of the list $[(x,\eta(x)): x \in D]$. The set of points $D$ is called the \emph{discretization} of $\eta$.

We are also given as input to (\ref{prob:cr}) an integer $k\geq 1$, indicating the number of cuts on $f$ we intend to make. The last piece of input we are given is a function $c:2^{[0,s]}\times [0,t]\times[0,t]\rightarrow \R_+\cup\{\infty\}$. For any $a<b \in [0,s]$, $l<r\in [0,t]$, the value of $c([a,b],\ell, r)$ measures the cost of using the segment $\restr{f}{[a,b]}$ of $f$, in order to cover a segment of $g$ by clamping the left endpoint of $\restr{f}{[a,b]}$ at $\ell$ and the right endpoint at $r$. See Subsection \ref{sec:diss} for an elaboration, where we discuss a particular cost function used in our target application. In our models we treat the function $c$ as a black box oracle.

In a (\ref{prob:cr}) problem, the decisions are as follows: Choose $k$ positions on the deformed curve, $p_1 <\dots <p_{k} \in P,$
where $P \subseteq [0,s]\cap \Q$ is the discretization of $f$. For simplicity we assume that $p_1 > 0$ and $p_k <s$ and let $p_0=0, p_{k+1}=s$. These cuts segment $f$ into pieces $\restr{f}{[p_0,p_1]}, \dots, \restr{f}{[p_k,p_{k+1}]}$, which will be mapped onto the ideal curve $g$. We index the resulting pieces of the deformed curve by the set $\{0\} \cup [k]$. Here $[k]$ is a common shorthand notation for $\{1,\dots, k\}$.

Let $Q\subseteq [0,t] \cap \Q$ be the discretization of $g$. We map the segments of $f$ onto $g$ by making the following decisions: for each $i \in \{0\} \cup[k]$, we select two positions $\ell_i, r_i \in Q$ indicating positions of the left and right endpoints that the $i^{th}$ segment of $f$ will take on the ideal curve.

The objective in our (\ref{prob:cr}) problem is to minimize the total dissimilarity induced by our solution. We introduce the notion of coverage. Suppose $$Q=\{ q_{1}, \dots, q_{|Q|} \}.$$ For $j \in [|Q|-1]$ we say that the interval or segment $[q_{j}, q_{j+1}]$ on $g$ is \emph{covered} by our solution if there is some $i \in J$ such that $$\min(\ell_{i},r_i) \leq q_{j} \leq q_{j+1} \leq \max(\ell_i,r_{i}).$$ In this case piece $i$ \emph{covers} $[q_{j}, q_{j+1}]$. Let $\uncov(\ell, r)$ be the set of segments $[q_{j}, q_{j+1}]$ on $g$ that are not covered by our solution. Note that since our clamp points $\ell_i, r_i$ for piece $i$ are elements of $Q$ our notion of coverage includes any intervals of the ideal curve located between where we clamped the endpoints of our curve segment.

To penalize not covering a segment of the ideal curve we select a parameter $\delta \geq 0$ and add the term $$\delta |\uncov(\ell,r)|$$
In summary, we may formally state our problem as follows:
 \begin{align*}\tag{CR}\label{prob:cr}
 \min& \sum_{i \in J} c([p_{i},p_{i+1}],\ell_i, r_i) + \delta |\uncov(\ell,r)|\\
 \text{s.t. }
 & p_1<\dots<p_k \in P \\
 &\ell, r \in Q^{\{0\} \cup [k]}.
 \end{align*}

 Notice that (\ref{prob:cr}) requires exactly $k$ cuts to be made on the deformed curve. If one wanted find the optimal solution with at most $k$ cuts they can simply solve (\ref{prob:cr}) for different cut numbers from $0$ to $k$.



 \subsection{Without Rearrangement}\label{sec:without-rearrangement}
\paragraph{} In a general $3$-dimensional cranial vault remodeling surgery, where the entire skull is being reorganized, it is possible to take pieces and rearrange their relative positions. The (\ref{prob:cr}) model as described previously also admits this possibility. In practice, the (BRP) surgery for reshaping the bandeau does not admit this. That is to say, the relative positions of the cut segments of the deformed curve are not changed by their mapping onto the ideal curve. In fact the surgeons do not break apart the bandeau along the points where they cut it, rather they bend the bandeau into its proper shape at these points. To model this we define the (\ref{prob:cr}) problem \emph{without rearrangement}. In this problem we add the constraints: 
\begin{itemize}
  \item $\ell \leq r$ to indicate that the right endpoint of segment $i$ needs to be placed further right than the left endpoint of segment $i$, i.e.\ that segments cannot be flipped.
  \item for all $i \in [k]$, $r_{i-1} = \ell_{i}$ to indicate that the $i$-th segment is mapped to the right of the $i-1$-th segment.
\end{itemize}

In Section~\ref{sec:dp} we will give a polynomial time algorithm based on dynamic programming for the (\ref{prob:cr}) problem without rearrangement. This will be contrasted with a proof in Section~\ref{sec:hardness} that the (\ref{prob:cr}) is strongly NP-hard in general.

\section{Dynamic Programming Algorithm}\label{sec:dp}
\paragraph{}
In this section, we present a dynamic programming algorithm for (\ref{prob:cr}) problems without rearrangement. For the remainder of this section, we consider an instance $I$ of the (\ref{prob:cr}) problem without rearrangement, where the deformed curve $f:[0,s]\rightarrow \R$ and the ideal curve $g:[0,t]\rightarrow \R$ are discretized as point sets $P$ and $Q$ respectively.
Let $c$ be the associated cost function, $\delta \geq 0$ the uncoverage parameter, and let $k\geq 1$ be an integer indicating the associated number of cuts.

We intend to break instance $I$ of (\ref{prob:cr}) into a set of suitable subproblems so that we can impose a dynamic programming paradigm.
Choose an integer number of cuts $k' \in [0,k]$.
Choose $a,b \in [0,s] \cap P$ such that $a \leq b$ and $\size{[a,b] \cap P} \geq k' + 2$.
This is a subinterval on the deformed curve on which we can make $k'$ cuts excluding the endpoints.
Similarly choose $d,e \in [0,t] \cap Q$ such that $d\leq e$ and $\size{[d,e] \cap Q} \geq k'+2$.
Now let $\text{CR}([a,b]],[d,e], k')$ denote the optimal value of (\ref{prob:cr}) problem without rearrangement using cost function $c$, on the deformed curve $\restr{f}{[a,b]}$ with feasible cut positions $P\cap [a,b]$ and ideal curve $\restr{g}{[d,e]}$ with feasible clamp positions $Q \cap [d,e]$ and following added constraints:
\begin{enumerate}
  \item we restrict to feasible solutions which use exactly $k'$ cuts, and
  \item we restrict to feasible solutions for which $\uncov(\ell, r)$ is empty.
\end{enumerate}
By our choice of parameters, which we call a \emph{feasible} choice of parameters, $\text{CR}([a,b],[d,e],k')$ always has a feasible solution.
Our goal is to give a recursive formula for $\text{CR}([a,b],[d,e],k')$ that can be used to design a dynamic program.
\begin{theorem}\label{th:dp-opt}
For any feasible choice of parameters the following recursive equation holds
$$\text{CR}([a,b],[d,e],k')=$$
$$=\begin{cases}
  {\displaystyle\min_{(p,q) \in P\cap (a,b)\times Q\cap (d,e)}} \left(c([a,p], d,q) + \text{CR}([p,b], [q,e], k'-1)\right), &
  \text{ if $k' > 0$ }\\
  {\displaystyle c([a,b], d,e)}& \text{ otherwise.}
  \end{cases}
  $$
\end{theorem}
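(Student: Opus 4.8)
The plan is to prove the recursion by the standard dynamic-programming argument: dispatch the base case, then establish the two inequalities separately using the \emph{optimal substructure} of full-coverage solutions. A preliminary observation I would record and reuse throughout is that full coverage without rearrangement forces the clamp points into a contiguous chain $d=\ell_0 \leq r_0 = \ell_1 \leq \cdots \leq r_{k'} = e$, and since the oracle $c$ is evaluated only at pairs with distinct endpoints, every piece must have positive width; hence in fact $d = \ell_0 < r_0 < \cdots < r_{k'} = e$, giving $k'+2$ strictly increasing clamp points in $Q\cap[d,e]$.

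For the base case $k'=0$, the restricted instance has the single piece $\restr{f}{[a,b]}$, whose clamps $\ell_0,r_0 \in Q\cap[d,e]$ must satisfy $\min(\ell_0,r_0)\leq d$ and $e\leq\max(\ell_0,r_0)$ in order to cover every segment of $[d,e]$. Combined with $\ell_0,r_0\in[d,e]$ and $\ell_0\leq r_0$, this forces $\ell_0=d$ and $r_0=e$, so the unique feasible solution has value $c([a,b],d,e)$, matching the claim.

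For the inductive step I would argue both inequalities. For the $\leq$ direction, I fix any $(p,q)\in P\cap(a,b)\times Q\cap(d,e)$ and an optimal solution $S$ of $\text{CR}([p,b],[q,e],k'-1)$, whose leftmost clamp equals $q$ by the monotonicity observation. Prepending to $S$ the piece $\restr{f}{[a,p]}$ with clamps $(\ell_0,r_0)=(d,q)$ introduces exactly one new cut at $p$, preserves $\ell\leq r$ and the contiguity $r_0=q=\ell_1$, and covers $[d,q]\cup[q,e]=[d,e]$; this gives a feasible solution of $\text{CR}([a,b],[d,e],k')$ of value $c([a,p],d,q)+\text{CR}([p,b],[q,e],k'-1)$, and minimizing over $(p,q)$ yields the bound. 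For the $\geq$ direction, I take an optimal solution of $\text{CR}([a,b],[d,e],k')$, set $p:=p_1$ (its first cut) and $q:=r_0$, and use the structural observation to see $p\in P\cap(a,b)$, $q\in Q\cap(d,e)$, and $\ell_0=d$, so the first piece contributes exactly $c([a,p],d,q)$; the remaining pieces use exactly $k'-1$ cuts on $(p,b)$, are contiguous from $q$, and fully cover $[q,e]$, hence form a feasible solution of $\text{CR}([p,b],[q,e],k'-1)$ of cost at least that optimum. Summing gives the reverse inequality, and the two together give equality.

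The delicate point — and the step I would treat most carefully — is the boundary and degeneracy bookkeeping: guaranteeing that the split clamp $q=r_0$ lands \emph{strictly} inside $(d,e)$ rather than on an endpoint, and that the tail subproblem is still a \emph{feasible} choice of parameters. Here I would lean on the positive-width observation (which rules out $q=d$ and, since $k'\geq 1$, $q=e$) and on the inherited point counts $\size{[p,b]\cap P}\geq k'+1$ and $\size{[q,e]\cap Q}\geq k'+1$, which follow by deleting the single point $a$ (resp.\ $d$) from the $k'+2$ grid points of the parent instance. Everything else is the routine prepend/strip correspondence between solutions of the parent and child problems.
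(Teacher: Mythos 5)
Your proof is correct and takes essentially the same route as the paper's: dispatch the base case, then induct on $k'$, obtaining the upper bound from feasibility of the expanded recursion and the lower bound by peeling off the first piece of an optimal solution and recognizing the remainder as feasible for $\text{CR}([p_1^*,b],[r_0^*,e],k'-1)$. You are in fact somewhat more careful than the paper about the forced clamp chain $d=\ell_0 < r_0=\ell_1 < \cdots < r_{k'}=e$, the strict interiority of the split point $q=r_0$, and the feasibility of the tail subproblem's parameters, all of which the paper leaves implicit.
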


\begin{proof} 
If we expand the recursive expression on the right hand side of the equation, we obtain the cost of a feasible solution to the problem for which $\text{CR}([a,b], [d,e]), k')$ is the optimal value, and hence the right hand side expression is an upper bound on the optimal value $\text{CR}([a,b], [d,e], k')$. It remains to show it is also a lower bound.
  
We proceed by induction on $k$. If $k'=0$ then there is no decision to make. The only feasible solution is to map $\restr{f}{[a,b]}$ onto $\restr{g}{[d,e]}$ and pay cost $c([a,b],d,e)$ as desired.

Hence we may assume $k' > 0$ and the inductive hypothesis holds on all $\text{CR}(\cdot,\cdot,k'')$ where $k'' < k'$.  Let $p_1^*, \dots, p_{k'}^* \in P\cap(a,b)$ and $(\ell_1^*,r_1^*),\dots, (\ell^*_{k'},r_{k'}^*) \in Q^2\cap (d,e)^2$ be an optimal solution to $\text{CR}([a,b],[d,e],k')$. Observe that
   \begin{align*}
     \text{CR}([a,b],[d,e],k') &= \sum_{i=1}^{k'} c([p^*_{i-1},p^*_i], \ell^*_i,r^*_i) = c([a, p^*_1],d, r^*_1) + \underbrace{\sum_{i=2}^{k'} c([p^*_{i-1},p^*_i],\ell^*_i,r^*_i)}_{\geq CR([p_1^*,b], [r^*_1,e],k'-1)} \\
     &\geq \min_{(p,q) \in P\cap(a,b)\times Q\cap(d,e) } \left(c([a,p],d,q) + \text{CR}([p,b], [q,e], k'-1)\right).
   \end{align*}
   The underbrace inequality above follows since $p_2^*,\dots, p_{k}^*$ and $(\ell_2^*,r_2^*),\dots, (\ell^*_{k'},r_{k'}^*)$  is a feasible solution to the (\ref{prob:cr}) problem without rearrangement using exactly $k'-1$ cuts on curves $\restr{f}{[p_1^*,b]}$ and $\restr{g}{[\ell_2^*,e]}$ which pays no uncoverage penalty. Note that since this problem is without rearrangement, $\ell_1^* < r^*_1$ and $r^*_1 = \ell^*_2$.
\end{proof}
\paragraph{}
 By Theorem \ref{th:dp-opt}, when $\delta = 0$, $I$ can be solved using standard dynamic programming techniques utilizing a table of size
$O(|P|^2|Q|^2k)$. Each table entry can be computed in $O(|P||Q|)$
time, yielding an algorithm solving (\ref{prob:cr}) without rearrangement in time $O(|P|^3|Q|^3k)$. Since $k$ is at most $\min\{|P|,|Q|\}$ this is a polynomial time algorithm. In fact, the experiments in Section \ref{sec:comp-exper} demonstrate that a very small number of cuts are needed relative to the size of $P$ and $Q$ in order to obtain an extremely high quality bandeau reconstruction.

For $\delta >0$, observe that the uncoverage penalty $\gamma(p_0 +(s-p_k))$ is completely determined once $\ell_0$ and $r_k$ are fixed because we are working in the paradigm which disallowed rearrangement. Since $r_{i-1}= l_i$, and $l <= r$, the only possible uncovered parts of $g$ are the ones on points less than $l_0$ or greater than $r_{k}$, which gives precisely the uncoverage penalty above.  Then it remains to solve a (\ref{prob:cr}) problem with optimal value $\text{CR}([0,s], [\ell_0, r_k], k)$. By enumerating our choices for $\ell_0,r_k$ and solving the subproblems using Theorem \ref{th:dp-opt}, we get the following corollary:
\begin{corollary}
Any (\ref{prob:cr}) problem without rearrangement can be solved in time polynomial in $|P|,|Q|$ and linear in $k$.
\end{corollary}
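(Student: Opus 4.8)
The plan is to turn the recursion of Theorem~\ref{th:dp-opt} into a bottom-up dynamic program and then dispatch the uncoverage term by a cheap outer enumeration. First I would set up a table indexed by feasible parameter choices $([a,b],[d,e],k')$ with $a,b \in P$, $d,e \in Q$, and $k' \in \{0,\dots,k\}$, storing the value $\text{CR}([a,b],[d,e],k')$; there are $O(|P|^2|Q|^2 k)$ such entries. I would fill the table in order of increasing $k'$: the base case $k'=0$ is a single oracle call $c([a,b],d,e)$, and for $k'>0$ each entry is computed from entries with smaller $k'$ via the minimization in Theorem~\ref{th:dp-opt}, which ranges over $(p,q) \in (P\cap(a,b))\times(Q\cap(d,e))$ and so costs $O(|P||Q|)$ evaluations. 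Correctness of every entry is exactly the content of Theorem~\ref{th:dp-opt}, so no fresh argument is needed there, and multiplying table size by per-entry cost gives an $O(|P|^3|Q|^3 k)$ bound for all the fully-covered subproblems.

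Next I would reduce the full problem---which may leave part of $g$ uncovered and pay $\delta|\uncov(\ell,r)|$---to these fully-covered subproblems. The observation to justify is that in the no-rearrangement model the constraints $\ell_i \le r_i$ and $r_{i-1} = \ell_i$ force the images of the $k+1$ segments to tile the interval $[\ell_0, r_k]$ contiguously, so every segment of $g$ strictly inside $[\ell_0, r_k]$ is covered and the uncovered segments are precisely those lying below $\ell_0$ or above $r_k$. Hence $|\uncov(\ell,r)|$ depends only on the two outer clamp points $\ell_0$ and $r_k$, and once those are fixed the remaining optimization is exactly a fully-covered instance with optimal value $\text{CR}([0,s],[\ell_0,r_k],k)$. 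I would therefore enumerate all $O(|Q|^2)$ pairs $(\ell_0, r_k) \in Q^2$ with $\ell_0 \le r_k$, read the matching table entry (skipping pairs for which no feasible fully-covered solution exists), add the constant $\delta|\uncov(\ell,r)|$ computed directly from $\ell_0, r_k$ and $Q$, and return the minimum over all pairs; this treats $\delta=0$ uniformly as the case where the additive penalty vanishes.

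Finally I would account for the total cost: building the table dominates at $O(|P|^3|Q|^3 k)$, while the outer enumeration adds only $O(|Q|^2)$ lookups plus uncoverage bookkeeping, which is absorbed into that bound, giving a running time polynomial in $|P|$ and $|Q|$ and linear in $k$ as claimed. The main obstacle I anticipate is not the complexity accounting but the correctness of the reduction in the second step: one must argue carefully that fixing $\ell_0$ and $r_k$ genuinely determines the entire uncovered set (that no internal gaps can arise under no-rearrangement) and that the covered middle decouples cleanly into the subproblem solved by the table, so that combining the table value with the additive penalty indeed yields a globally optimal solution to the original instance.
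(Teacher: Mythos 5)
Your proposal is correct and follows essentially the same route as the paper: it builds the $O(|P|^2|Q|^2k)$-entry table from Theorem~\ref{th:dp-opt} with $O(|P||Q|)$ work per entry, and then handles $\delta>0$ by observing that under the no-rearrangement constraints $\ell_i\le r_i$ and $r_{i-1}=\ell_i$ the uncovered set is determined entirely by $\ell_0$ and $r_k$, so an outer enumeration over these two clamp points reduces everything to fully-covered subproblems. The only difference is cosmetic: you spell out the tiling argument for why no internal gaps can arise, which the paper states more tersely.
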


%
%
%

\section{Computational Experiments}\label{sec:comp-exper}

In this section, we describe results obtained by testing our dynamic programming algorithm for (\ref{prob:cr}) without rearrangement on a test suite of craniosynostosis cases. Due to medical privacy regulations and institutional policy it is difficult and time-consuming to obtain scans of actual patient skulls from operations performed at the Hospital for Sick Children. We were able to obtain three such test cases including both pre-op and post-op scans with all patients' personal identifying information removed. Results from the tests performed with those are described in Subsection~\ref{sec:prepost}. To augment these results we designed a mathematical model to generate a large number of synthetic test cases. In Subsection~\ref{sec:synth-model} we describe the model and test cases resulting from it. In Subsection~\ref{sec:synth-results} we report on the results of applying our algorithm to the test cases generated by our synthetic model. Subsection~\ref{sec:diss} discusses the particular choice of cost function for problem (\ref{prob:cr}) we implemented in our tests.

For our tests we ran an implementation of algorithm discussed in Section~\ref{sec:dp} written in Python 3.6.7 using Numpy 1.15. The tests were run on a Macbook Pro with a 2.6GHz 6-Core Intel i7 processor and 16GB of DDR4 RAM, running macOS Catalina version 10.15.7.
\subsection{Computing the Dissimilarity}\label{sec:diss}
\begin{wrapfigure}{r}{0.5\textwidth}
  \centering
  \subfloat[Curve $f$]{
  \begin{tikzpicture}[scale=0.23]\draw[very thick,rotate=30](0, 0) to [out = 50, in = 185] (3, 3) to [out = 5, in = 120] (6, 2.6) to [out=300, in = 120] (8.4, 1.3) to [out = 279, in = 160] (10, 0);
  \draw [very thick, dashed, ->,rotate=30] (0,0) to (10,0);
  \node [below] at (0,0) {$L_f$};
  \node [below] at (8.5,4.5) {$R_f$};
  \end{tikzpicture}
  }
  \hspace{\fill}
  \subfloat[Curve $g$]{
  \begin{tikzpicture}[scale=0.23]
  \draw [very thick] (0, 3.5) to[out = 50, in =180] (6,6.5) to [out = 0, in = 110] (10, 3.5);
  \draw [very thick, dashed, ->](0,3.5) to (10, 3.5);
  \node [below] at (0,3.5) {$L_g$};
  \node [below] at (10,3.5) {$R_g$};
  \end{tikzpicture}}
  \hspace{\fill}
  \subfloat[$d(f,g)$]{
  \begin{tikzpicture}[scale=0.23]
  \draw [very thick] (0, 3.5) to[out = 50, in =180] (6,6.5) to [out = 0, in = 110] (10, 3.5);
  \draw[very thick,fill=gray](0, 3.5) to [out = 50, in = 185] (3, 6.5) to [out = 5, in = 120] (6, 6.1) to [out=300, in = 120] (8.4, 4.8) to [out = 279, in = 160] (10, 3.5)
  to [out = 110, in =0] (6,6.5) to [out=180, in = 50 ] (0,3.5)
  -- cycle
  ;
  \end{tikzpicture}
  }
  \caption{Calculating $d(f,g)$}
  \label{fig:dfg}
  \end{wrapfigure}
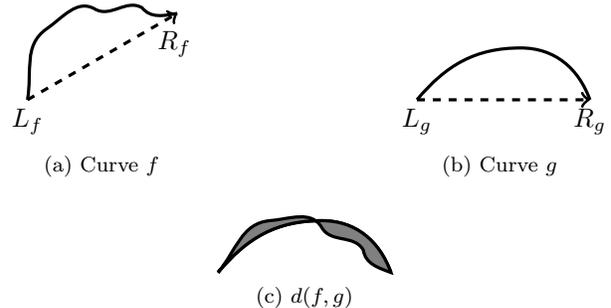


To evaluate the quality of a solution, we define, for any pair of curves $f:[0,s]\rightarrow \R$ and $g:[0,t]\rightarrow \R$, a dissimilarity measure $d(f, g) \in \mathbb{R} \cup \{\infty\}$. Note that the dissimilarity measure is not necessarily symmetric, that is, the measures $d(f, g)$ and $d(g, f)$ are not necessarily equal. Before proceeding we reiterate that the algorithm does not rely on any properties of $d$, and hence can be applied to different measures at the discretion of the user.

We will use $L_f$ and $R_f$ as shorthands for the left and right
endpoints $(0,f(0))$, and $(s,f(s))$ of $f$, respectively. We say that two curves $f$ and $g$ \emph{match} if the Euclidean
distance between the endpoints of $f$ is approximately equal to the
Euclidean distance between the endpoints of $g$. Formally, we
introduce a parameter $\alpha \in [0, 1]$, and we say that $f$ and $g$
match if
$
1 - \alpha \le \frac{\left \lVert L_g - R_g \right \rVert_2}{\left \lVert L_f - R_f \right \rVert_2} \le 1 + \alpha \ .
$

If $f$ and $g$ do not match, then we define $d(f, g) = \infty$. Next, we describe how to compute $d(f, g)$ when they do match. First, we rotate $g$ around $L_g$ so that its right endpoint is moved to the point $
\left(x(L_g) + \left \lVert L_g - R_g \right \rVert_2, y(L_g) \right) $, and we denote by $\tilde{g}$ the curve thus obtained. Next, we modify the curve $f$ as follows. We scale it by a factor of $\frac{\left \lVert L_{\tilde{g}} - R_{\tilde{g}} \right \rVert_2}{\left \lVert L_f - R_f \right \rVert_2}$, so that the Euclidean distance between its endpoint becomes equal to the Euclidean distance between the endpoints of $\tilde{g}$. Next, we translate and rotate it so that its left and right endpoints are mapped to $L_{\tilde{g}}$ and $R_{\tilde{g}}$ respectively. We denote by $\tilde{f}$ the curve thus obtained. Finally, we define $d(f, g)$ to be the area between the modified curves $\tilde{f}$ and $\tilde{g}$. If $\tilde{f}$ and $\tilde{g}$ are functions we can write
$
d(f, g) = \int_{x(L_{\tilde{f}})}^{x(R_{\tilde{f}})} | \tilde{f}(x) - \tilde{g}(x) | \ \mathrm{d}x \ .
$

Unfortunately we cannot guarantee that $\tilde{f}$ and $\tilde{g}$ are functions, so we draw on methods in computational geometry. Since $\tilde{f}$ and $\tilde{g}$ are piecewise linear and their endpoints line up, their union forms the boundary of a closed polygon $A$. By computing where the edges of $A$ intersect we can partition $A$ into a set of simple polygons in time bounded by a polynomial in the number of points in the discretization. Using the well-known Shoelace Method~\cite{meister1769generalia} we can compute the area of each such simple polygon and sum them to obtain the area of $A$. This area we denote $d(f,g)$.
Figure~\ref{fig:dfg} shows an example of two curves $f$ and $g$ and how $d(f,g)$ is calculated.

Now we are ready to define a cost function $c:2^{[0,s]}\times [0,t]\times[0,t]\rightarrow \R\cup\{\infty\}$ for the (\ref{prob:cr}) problem. For all $[a,b]\subseteq [0,s]$ and all $[\ell,r] \subseteq [0,t]$, we define
$$c([a,b],\ell,r) =
  d(f|_{[a,b]}, g|_{[\ell,r]}).
$$

Note that curves presented via an explicit discretization, such as those presented to the DP algorithm implied by Theorem \ref{th:dp-opt} are piecewise linear functions whose inflection points are all part of the input. Thus evaluating $d$ on pairs of such curves can be done in polynomial time (in the number of pieces), and hence $c$ can be queried in polynomial time. In our runs of the algorithm, we pre-compute a table of costs $c$ and assume $O(1)$ access to its elements during operation of the dynamic program.

\subsection{Comparing Our Algorithm to Surgical Outcomes}\label{sec:prepost}

The Hospital for Sick Children provided us with 3d images of three pre-operation (pre-op) and three corresponding post cranial vault remodelling operation (post-op) skulls of metopic\footnote{Metopic is a medical term referring to a common variant of craniosynostosis.} cases where they performed surgical interventions. The post-op scans are taken between $3$-$5$ days after the surgery. From these scans we created deformed curves plotted from a top-down perspective as described in Section~\ref{sec:compmodels}. Figure~\ref{fig:prepost-top-down} shows the resulting curves from one such case.

\begin{figure}
  \centering
  \includegraphics[scale=0.45]{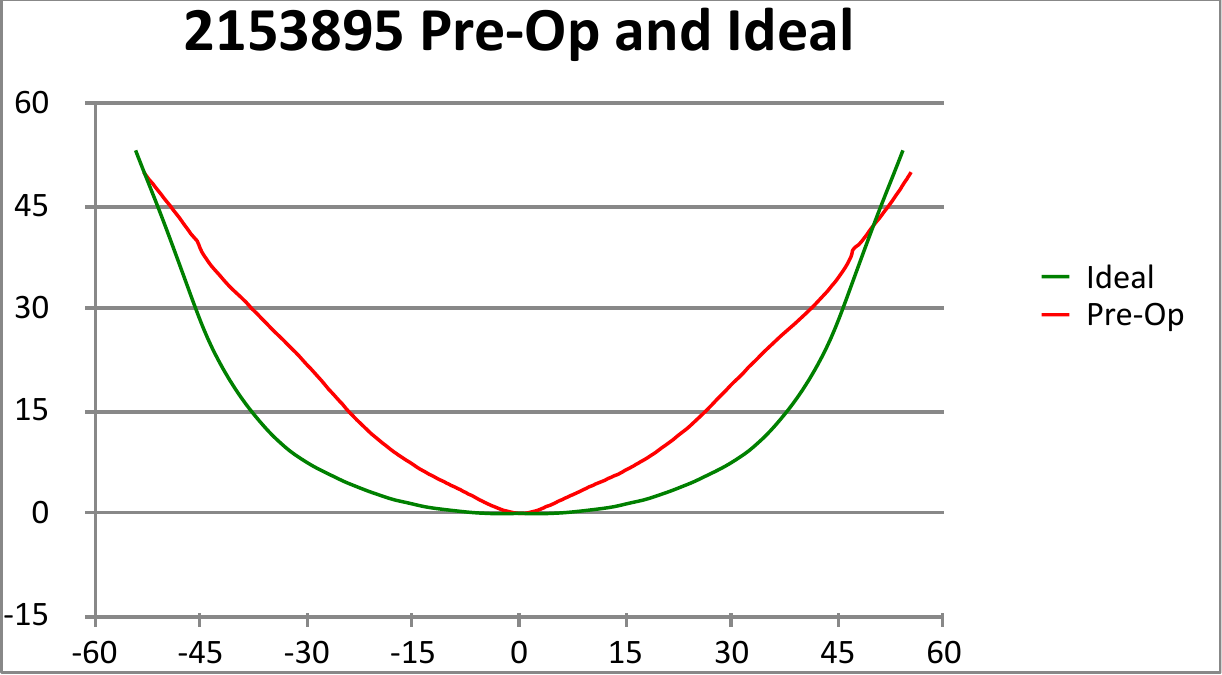}
  \includegraphics[scale=0.45]{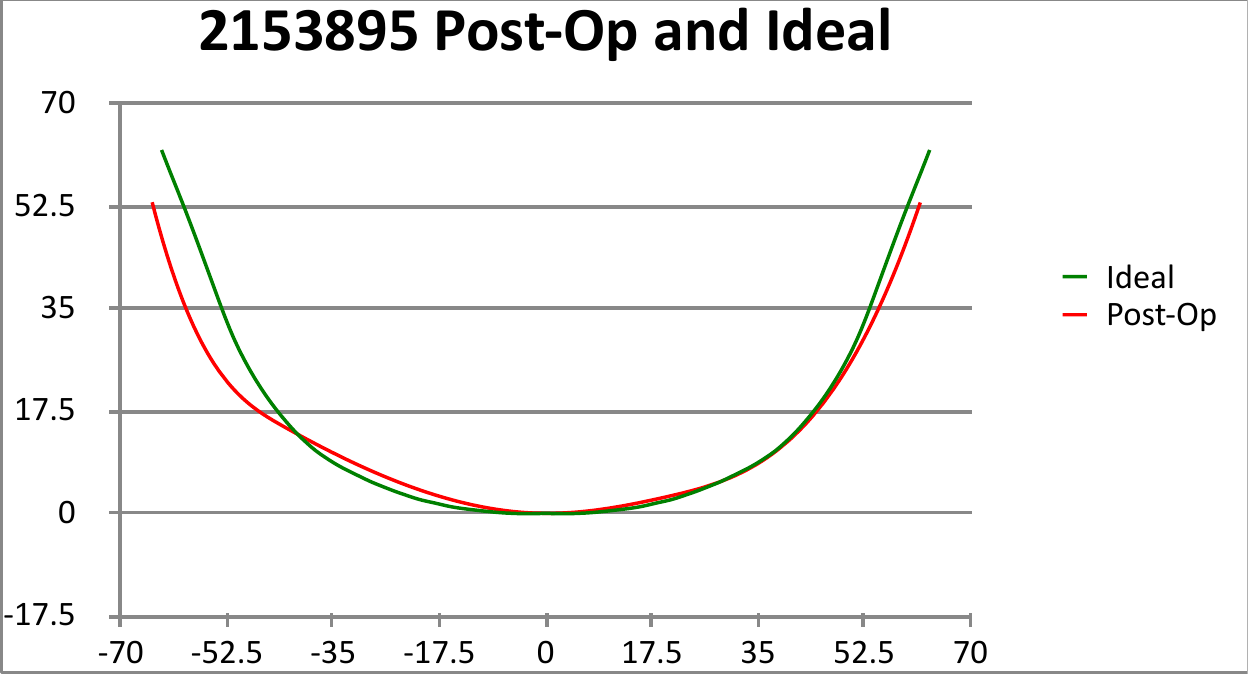}
  \caption{A top-down plot comparing Pre-Op and Post-Op Curves to Ideal}\label{fig:prepost-top-down}
\end{figure}

The curves are given to the algorithm as an explicit list of points describing a piecewise linear curve. The discretization was chosen so that the distance between two consecutive points is between $0.5$ and $1$mm. This resulted instances where the deformed curves had $200$ coordinates. On these three instances our algorithm took on average $0.323$ seconds to find the optimal solution for up to $13$ cuts. Figure~\ref{fig:pre_post} shows a plot of the optimal value of the objective function (Area Below Curve or ABC, see Section~\ref{sec:diss}) as a function of the number of cuts our algorithm applies for the cases provided by Sick Kids. The Post-Op ABC's are provided to show when our algorithm crosses the ABC achieved by the surgeons. In each case, only three cuts are needed for the algorithm to achieve a lower ABC than that achieved by the surgeon.
Although the surgeons did not record the specific number of cuts made in an actual operation, looking at the data provided, the surgeons estimate that at least $7$ cuts were made to achieve the Post-Op curves.

In Figure~\ref{fig:prepost_3} we see our algorithm's surgical plan using three cuts for a case provided by Sick Kids. The green curve is a top-down view of the ideal curve, and the red is a top-down view of the deformed curve. The blue curve is the curve which results from performing the surgery as recommended by the algorithm. The red squares denote the cut positions on the deformed curve, and the blue circles denote the corresponding positions on the ideal curve these cuts are mapped to.

\begin{figure}
  \centering
  \includegraphics[scale=0.8]{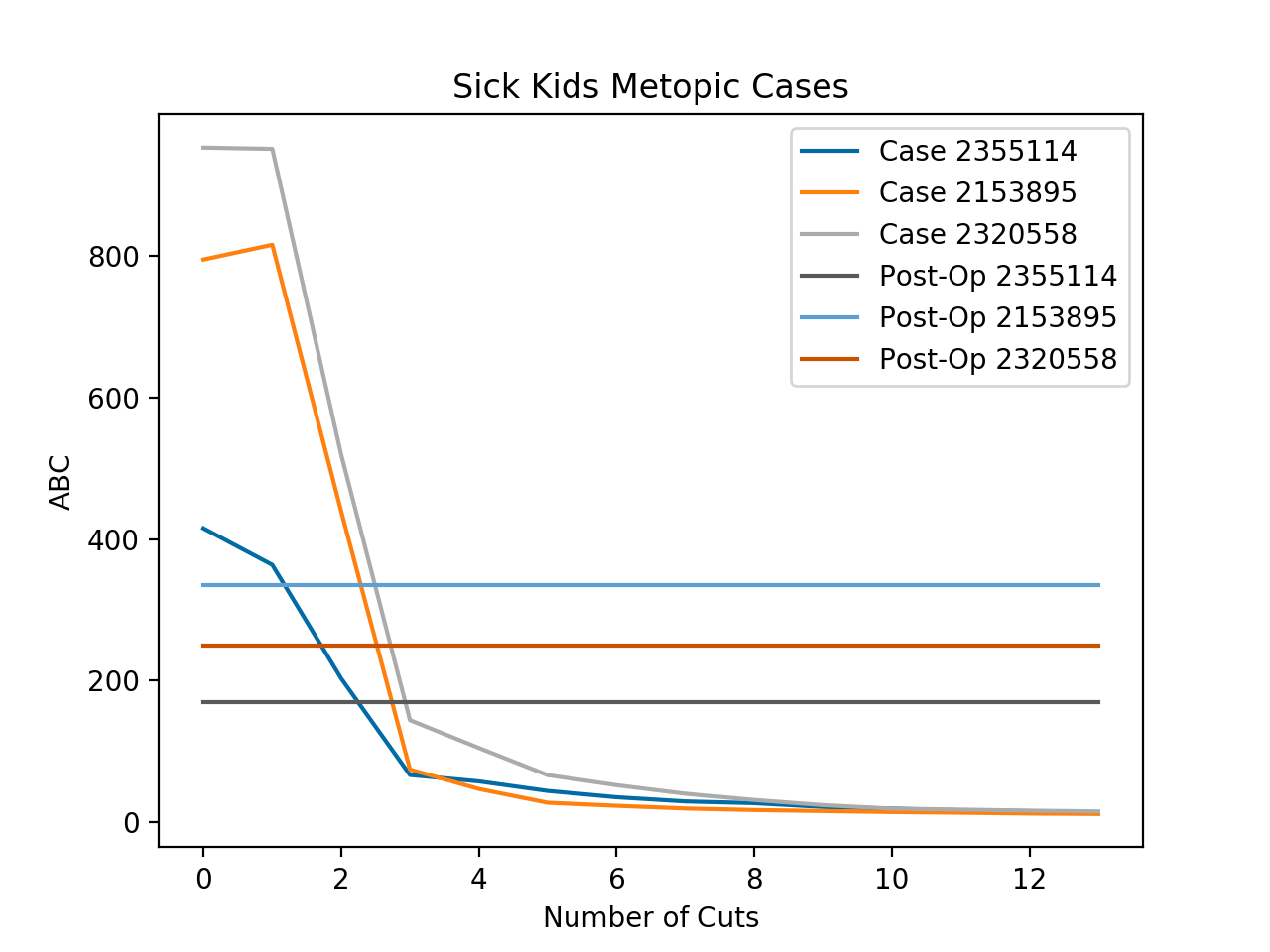}
  \caption{Cuts vs Area Below Curve (ABC) for Hospital For Sick Children Cases}\label{fig:pre_post}
\end{figure}

\begin{figure}
  \centering
  \includegraphics[scale=0.8]{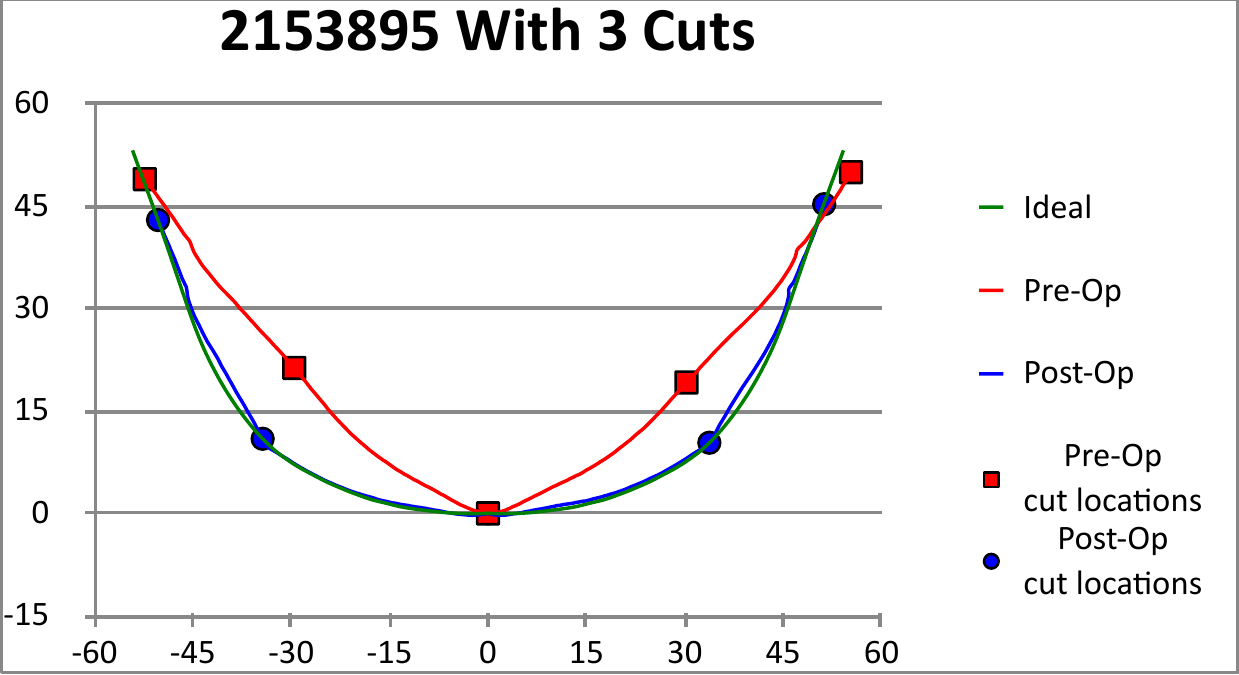}
  \caption{Algorithm's Surgical Recommendation for Case 2153895 With $3$ Cuts}\label{fig:prepost_3}
\end{figure}

\subsection{Synthetic Craniosynostosis Model}\label{sec:synth-model}

In the interest of collecting a larger data set on which to test our algorithm we created a synthetic mathematical model of craniosynostosis cases which can be used to generate instances of (\ref{prob:cr}). Our  model is based on two observations. The first is that craniosynostosis cases are often characterized by a few distinct regions where the curve exhibits different properties. For example, in metopic craniosynostosis the premature fusion of the suture in the middle of the forehead leads to steep curve segments coming to a point near the origin, and more shallow curve segments as the points get further from the origin. The second observation is that surgeons at the Hospital for Sick Children choose the scale of the ideal bandeau so that the furthest points from the origin, as well as the origin, intersect the deformed curve at the furthest points from the origin and the origin respectively. In most practical cases this is always possible.

Formally, for deformed point set $P$, the ideal point set $Q$ is chosen from a collection of scaled ideal curves so that
$$ \{\arg\min\{x: (x,y) \in P\}, (0,0), \arg\max\{x: (x,y)\in P\}\}\subseteq Q.$$
If we fix the scaling of the ideal curve then this means we have a specific set of identified coordinates we need deformed curves to intersect, and in the regions between the intersecting points, the specific type of craniosynostosis case will dictate the steepest or degree of the polynomial representing the curve.

The ideal bandeau provided by Sick Kids is a smooth $2$-dimensional curve intersecting the origin, with endpoints $(-x,y)$ and $(x,y)$ for some $x,y \in \R_{>0}$. The choice of coordinates $(x,y)$ may vary as the bandeau can be scaled to best fit each individual patient. The baseline bandeau without scaling we were given had $x = y = 50.0$.

The \emph{Synthetic Craniosynostosis Model} is defined as follows. Let $(x_0,y_0)$ denote the positive endpoint of ideal curve $g$ intersecting the origin. A synthetic deformed curve $f$ can be defined by specifying two things. The first is a list of points in $\R^2$, $L = \{\ell^1, \dots, \ell^{|L|}\}$, which is monotonically increasing in the first coordinate and contains $(-x_0,y_0)$, $(0,0)$, and $(x_0,y_0)$. The second is a list of rational numbers, $D = \{d_1, \dots, d_{|L|-1}\}$, of size $|L|-1$. For any $x \in [-x_0, x_0]$, the value of $f(x)$ is defined to be
$$f(x): = 
  a_i x^{d_i} + b_i, \quad\text{where $i \in \{1,\dots, |L|-1\}$ such that $x \in [\ell^i_0, \ell^{i+1}_0]$}$$
where each $a_i$ and $b_i$ are chosen to satisfy the equations
\begin{align*}
a_i(\ell_0^i)^{d_i} + b_i &= \ell_1^i \\
a_i(\ell_0^{i+1})^{d_i} + b_i &= \ell_1^{i+1}.
\end{align*}

Intuitively, $f$ is a piecewise curve consisting of $|L|$ segments, where the $i$-th segment is chosen as a curve of the form $ax^{d_i} + b$ such that it intersects the $i$-th and $i+1$-th points in the list $L$. The example in Figure~\ref{fig:synthetic-example} shows a synthetic deformed curve with $$L = \{(-49.3, 48.7), (-12.5, 7), (0,0), (12.5,7), (49.3, 48.7)\}$$ and $D = \{2,1,1,2\}.$

The attentive reader will notice that $f$ defined this way is not piecewise linear. To make the curve given as input to a (\ref{prob:cr}) problem piecewise linear, we discretize it by sampling a set of points $D \subseteq [\ell^1, \ell^{|L|}]\times \text{range}(f)$ uniformly spaced out in their first coordinate and approximating $f$ by the piecewise lienar interpolation of $D$. Of course, the greater the size of $D$ the better the approximation of $f$. In our examples we chose $|D|= 200$ since the ideal curves we were working with also had a discretization using $200$ points.

\begin{figure}
  \centering
  \includegraphics[scale=0.8]{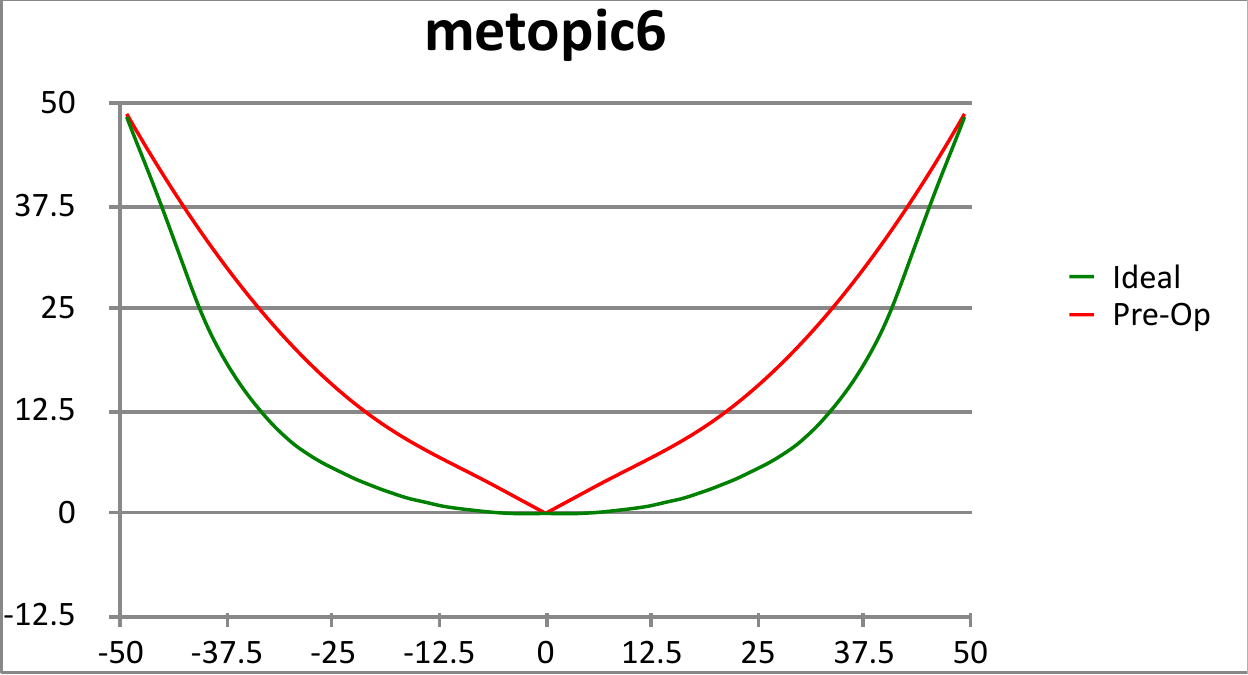}
  \caption{A Synthetic Metopic Case with a split-points at $(0,0)$ and $(\pm 12.5, 7)$.}\label{fig:synthetic-example}
\end{figure}

\subsection{Synthetic Model Test Results}\label{sec:synth-results}

Using the Synthetic Craniosynostosis Model discussed in Subsection~\ref{sec:synth-model} we created three buckets of test cases to reflect different types of craniosynostosis.
The data can be inspected at \href{https://github.com/wjtoth/optimized-cranial-bandeau-remodeling}{https://github.com/wjtoth/optimized-cranial-bandeau-remodeling}.

The first bucket consisted of $24$ synthetic metopic cases. To create these cases, for each test case we sampled an $(x,y)$ coordinate in $[12.5,25]\times [2,20]$ and chose $L$ to be
$$ \{(-49.3, 48.7), (-x, y), (0,0), (x,y), (49.3, 48.7)\}$$
and we sampled two integers $d_1 \in \{1,2\}$ and $d_2 \in \{2,3\}$ and chose $D$ to be $\{d_2,d_1,d_1,d_2\}$. This creates curves where the  split-point $(x,y)$ lies above the ideal curve, and the segments intersecting the origin are steeper than the segments which do not. This simulates the premature fusing of suture in the middle of the forehead which characterizes metopic craniosynostosis. An example can be seen in Figure~\ref{fig:synthetic-example}.

The second bucket consisted of $24$ synthetic sagittal cases. These cases are characterized by a premature fusing of the sagittal sutures~\cite{wikisagittal}, resulting in a skull shape where the forehead appears more flat and wide from a top down perspective. To simulate this phenomenon we created each test case by sampling an $(x,y)$ coordinate in $[27,55]\times [0.5,2]$ and two integers $d_1 \in \{1,2\}$ and $d_2 \in \{2,4\}$. We constructed $L$ and $D$ from these samples in the same manner as we did for the metopic cases above. The result is test cases where the  split-point lies below the ideal curve, and the curve is mostly flat until it reaches the  split-point, then bends up quickly to intersect the ideal endpoints. An example can be seen in Figure~\ref{fig:synthetic-example-sag}.

\begin{figure}
  \centering
  \includegraphics[scale=0.8]{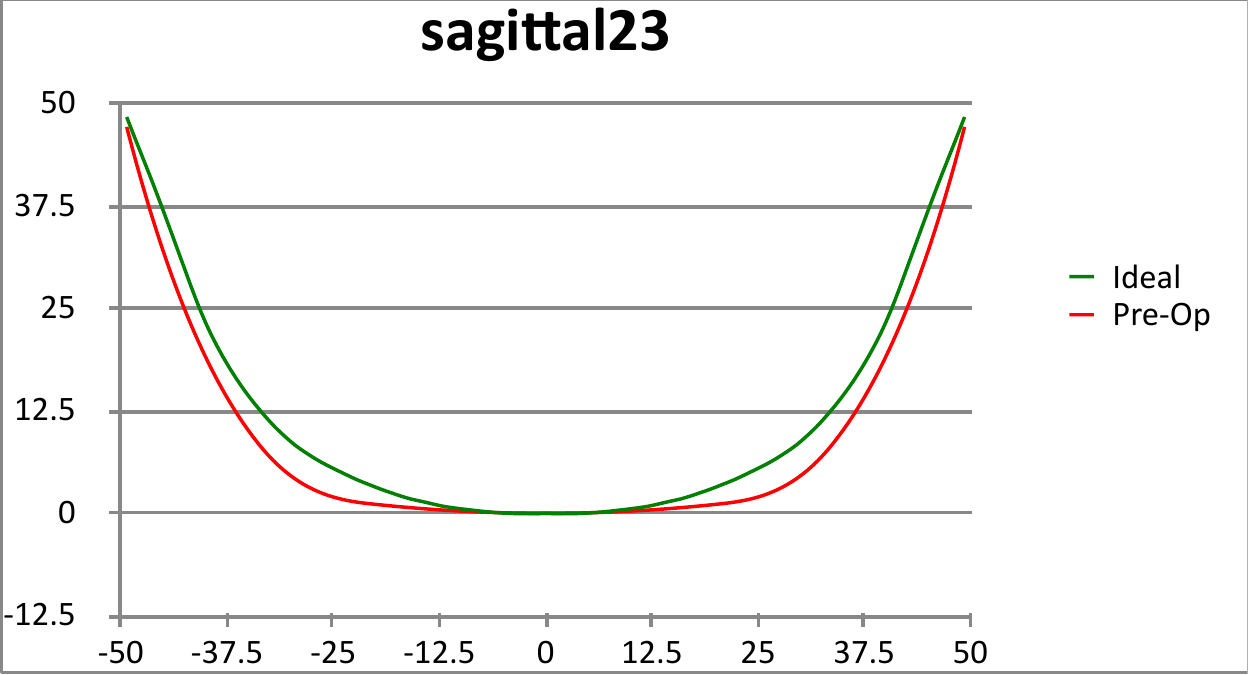}
  \caption{A Synthetic Sagittal Case}\label{fig:synthetic-example-sag}
\end{figure}

The last bucket consisted of $20$ extreme cases. The purpose of these cases was to test the robustness of our algorithm. We emphasize that these cases are not reflective of what real life cases may look like. To generate each case we sampled $5$ random  split-points $(x_1,y_1) \dots (x_5, y_5) \in [1,49]^2$ sorted in order of increasing $x$ value, and sampled $6$ random degrees $d_1,\dots, d_6 \in \{1,2,3,4\}$. Then we chose $L$ to be
$$\{(-50,50), (-x_5,y_5),\dots, (-x_1, y_1), (0,0), (x_1,y_1), \dots, (x_5,y_5), (50,50\}$$ and chose $D$ to be $\{d_6,\dots, d_1, d_1, \dots, d_6\}$.  An example can be seen in Figure~\ref{fig:synthetic-example-extreme}.

\begin{figure}
  \centering
  \includegraphics[scale=0.8]{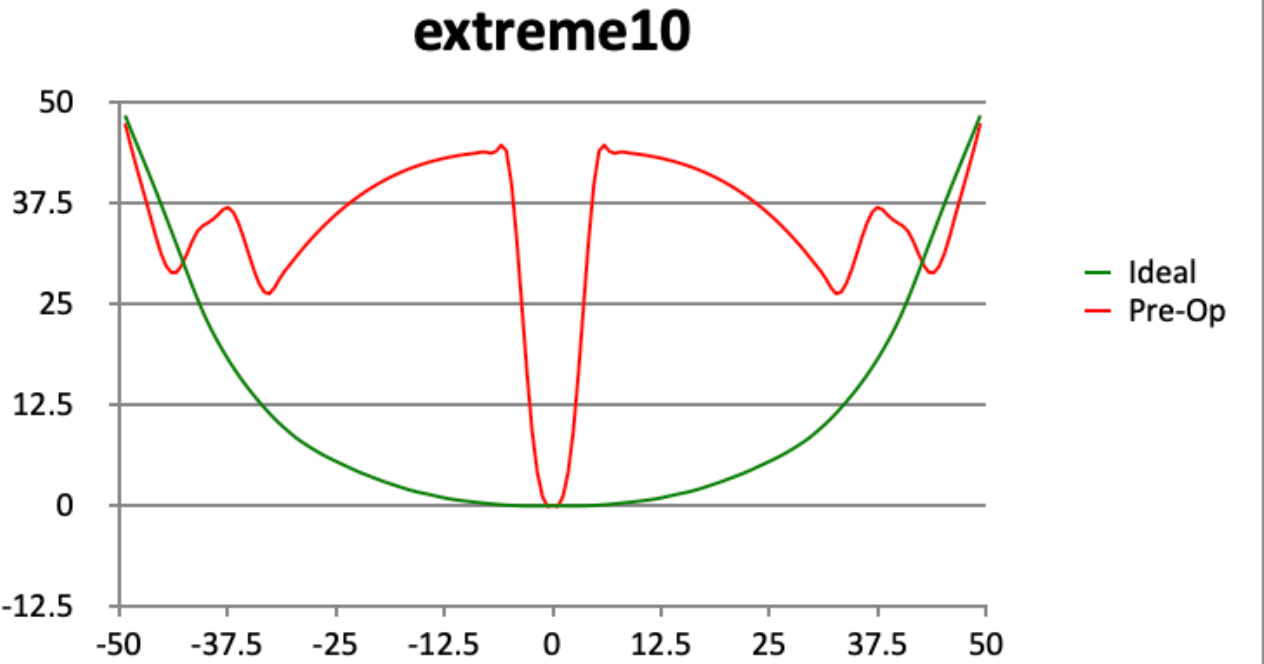}
  \caption{A Synthetic Extreme Case}\label{fig:synthetic-example-extreme}
\end{figure}

After discretization all of our synthetic test cases had on the order of $200$ points. The average computation time for the dynamic program in the metopic bucket was $0.13$ seconds, in the sagittal bucket it was $0.11$ seconds, and in the extreme bucket it was $1.12$ seconds. We note that the computational times are very low, even in the extreme cases,
and thus we consider that for all practical purposes, the running times are
acceptable from the application point of view. The $25$th, $50$th, and $75$th percentiles for ABC as function of Number of Cuts used by the algorithm for the metopic, sagittal, and extreme buckets are presented in Figure~\ref{fig:percentile}. In each bucket we can observe a sharp decline in Area Below the Curve, and our algorithm needing only $3$ cuts to remove over $75$\% of the ABC. This rapid decay suggests that rearrangement is not needed in this application to get good outcomes, providing a principled justification for not using rearrangement in surgerical procedures. We conclude that the algorithm is fast, robust, and does better than current practice in terms of quality of outcome as measured by our objective function.

\begin{figure}
  \centering
  \includegraphics[scale=0.5]{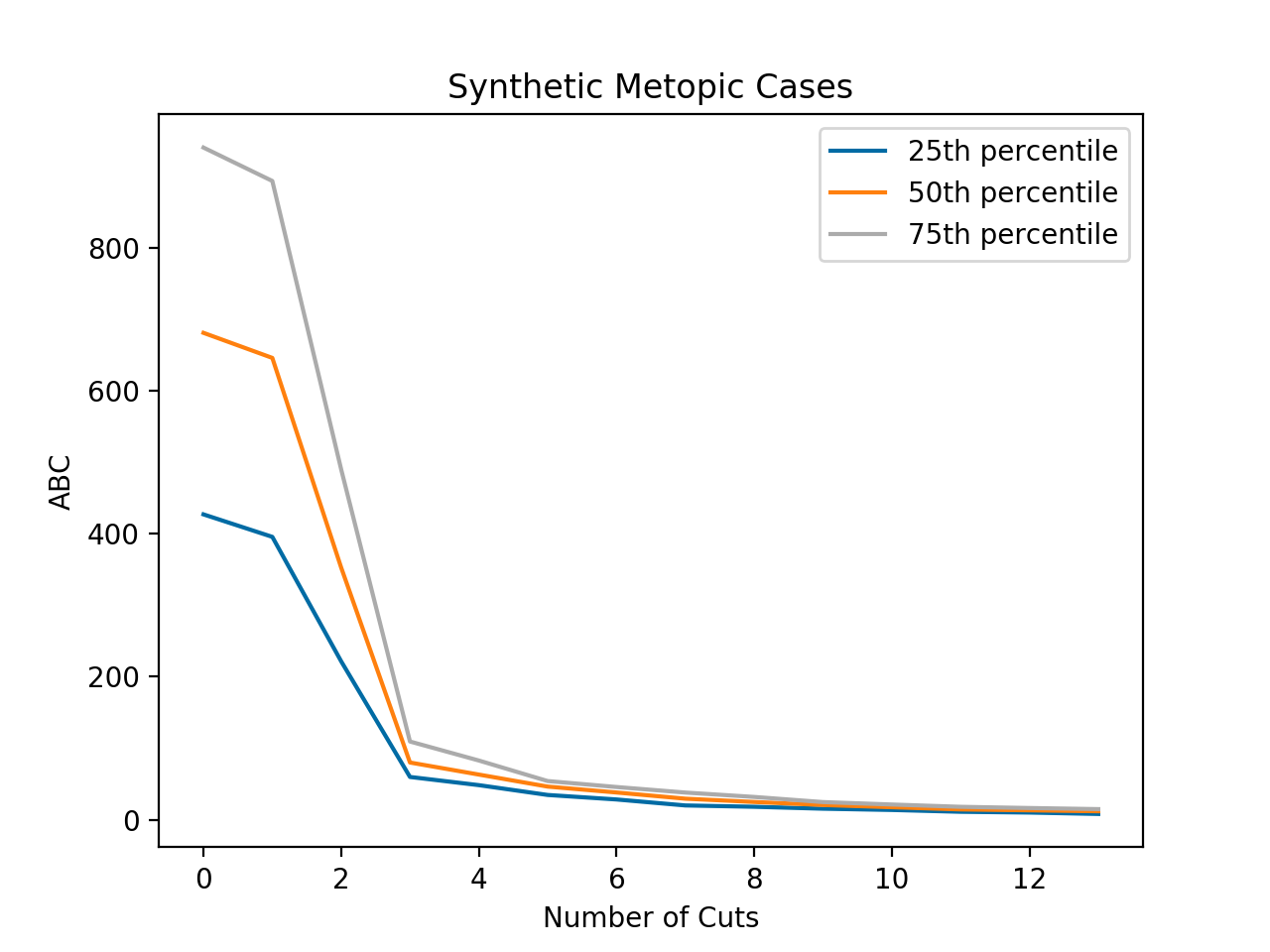}
  \includegraphics[scale=0.5]{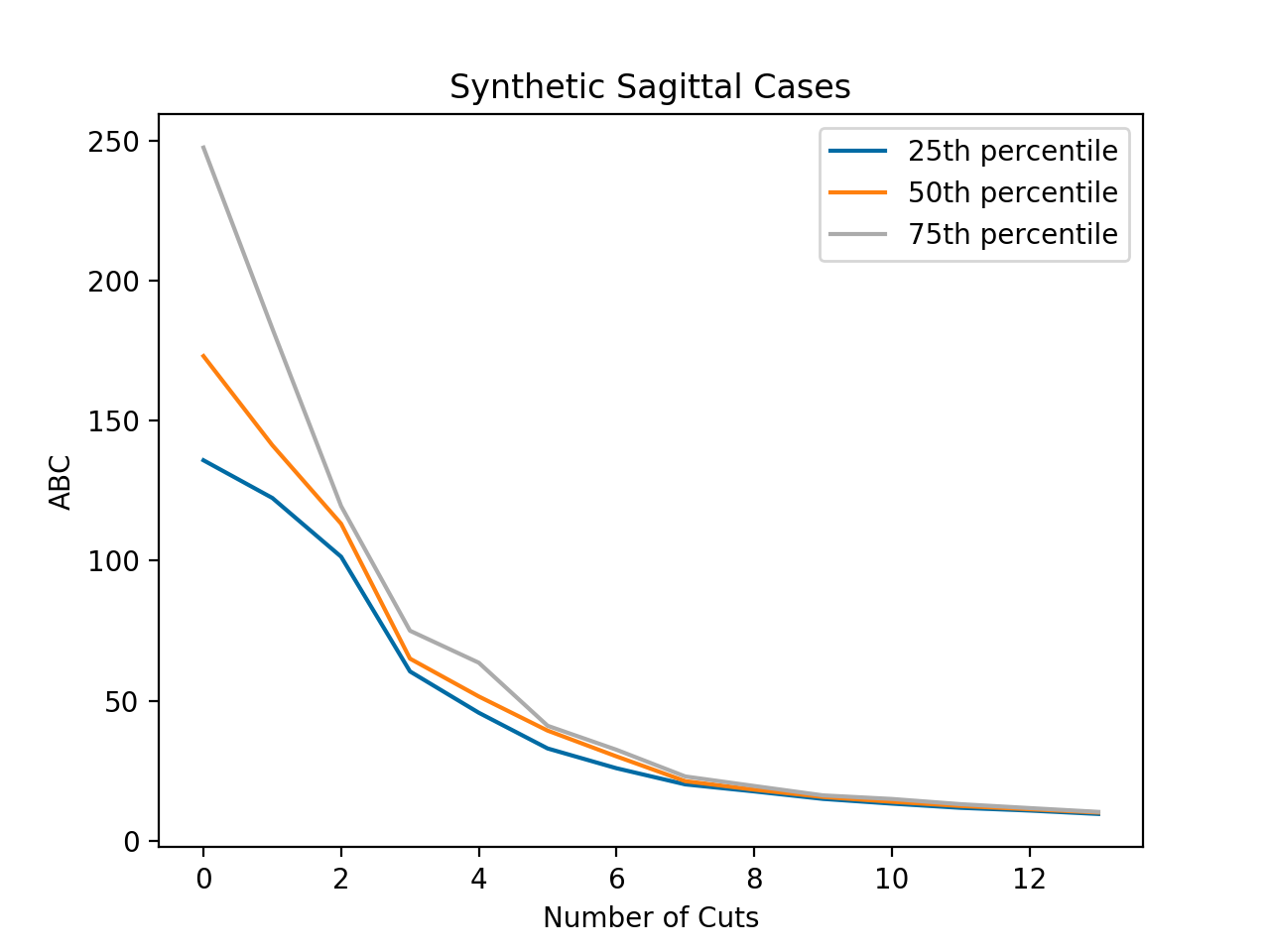}
  \includegraphics[scale=0.5]{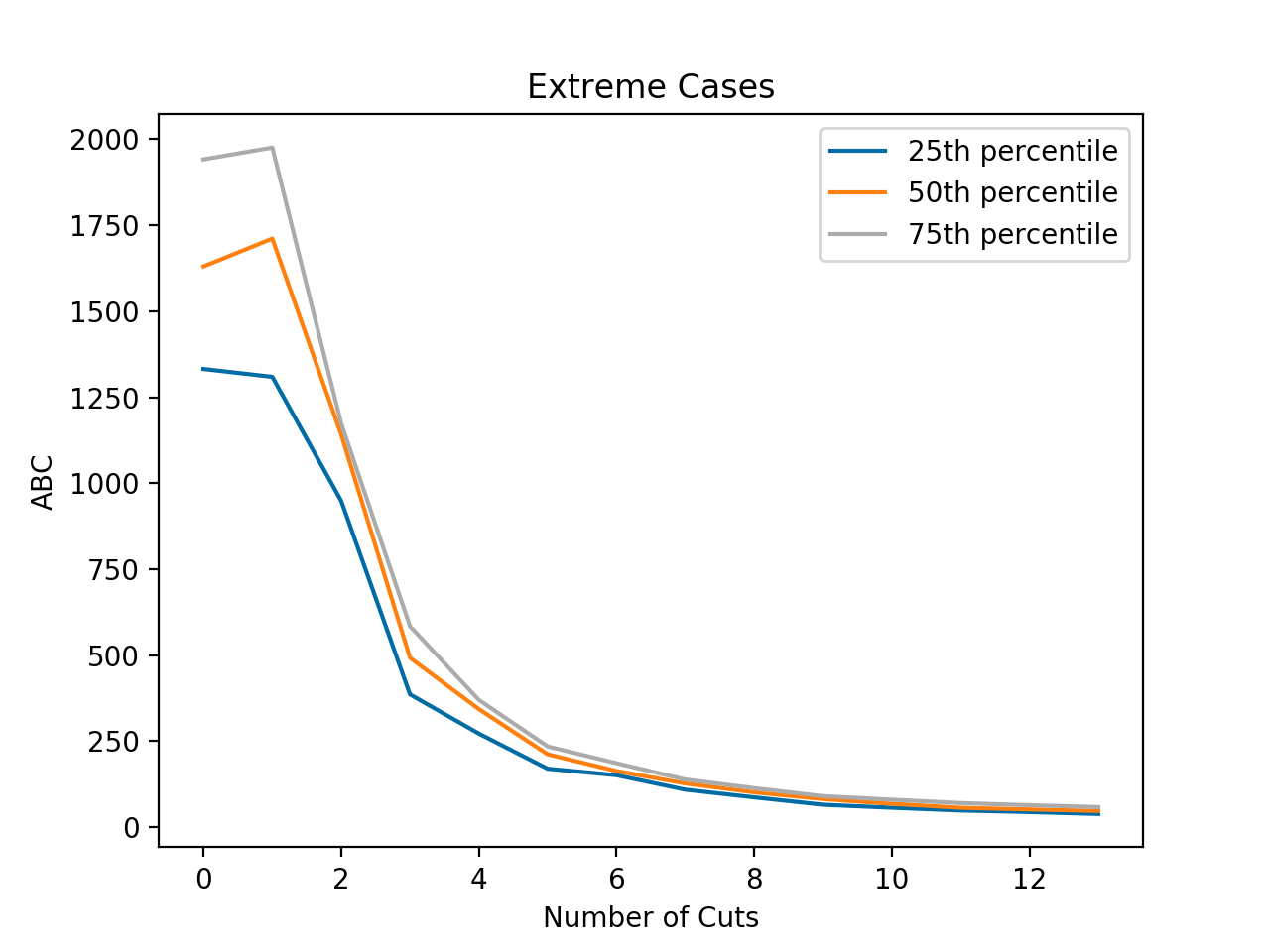}
  \caption{Percentile curves for Synthetic Cases}\label{fig:percentile}
\end{figure}

\section{Hardness in Model with Rearrangement}\label{sec:hardness}
\paragraph{}
The (\ref{prob:cr}) problem with rearrangement involves cutting and rearranging pieces along the length of the bandeau. This problem is of interest as we wish to gain an understanding of the cranial vault remodeling procedure, which involves cutting and rearranging segments of bone over a larger portion of the patient's skull.
We will show that the (\ref{prob:cr}) problem with rearrangement is strongly NP-hard. We will use a reduction from the strongly NP-complete 3-Partition problem~\cite{garey1979computers}.

\paragraph{}
In an instance of 3-Partition the input is a set $A = \{a_1, \dots, a_{3m}\}$ of $3m$ elements for a positive integer $m$. In addition we are given a weight function $s: A \rightarrow \mathbb{Z}_{\geq 0}$ such that $\frac{B}{4} < s(a) < \frac{B}{2}$ for all $a \in A$, such that $\sum_{a \in A} s(a) = m B$ where $B$ is a positive integer. The goal is to decide if there exists a partition of $A$ into $m$ sets $A_{1}, \dots, A_{m}$ such that for each $i \in \{1, \dots, m \}$ we have that $\sum_{a \in A_{i}} s(a) = B$. Note that the choice of $s$, implies that if such a partition exists then $|A_i|=3$ for each $i \in [m]$.

To prove (\ref{prob:cr}) problem with rearrangement is strongly NP-hard by Lemma \ref{lem:psedo_reduction}, we can give a pseudo-polynomial time transformation from 3-Partition.

\begin{lemma}\label{lem:psedo_reduction}\cite{garey1979computers} If $\Pi$ is strongly $NP$-complete and there exists a pseudo-polynomial time transformation from $\Pi$ to $\Pi'$, then $\Pi'$ is strongly $NP$-hard.
\end{lemma}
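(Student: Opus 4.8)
The plan is to unwind the definitions of \emph{strong NP-completeness} and \emph{pseudo-polynomial transformation} and to show that, once we restrict attention to instances of $\Pi$ whose numbers are already polynomially bounded, the hypothesized pseudo-polynomial transformation degenerates into an ordinary polynomial-time many-one reduction that, crucially, outputs only instances of $\Pi'$ whose numbers are polynomially bounded as well. That combination is exactly what is needed to certify strong NP-hardness of $\Pi'$.

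First I would fix the standard apparatus. For an instance $I$ write $\text{Length}(I)$ for the length of its encoding and $\text{Max}(I)$ for the magnitude of its largest number, and for a polynomial $p$ let $\Pi_p$ be $\Pi$ restricted to instances satisfying $\text{Max}(I) \le p(\text{Length}(I))$. That $\Pi$ is strongly NP-complete means $\Pi_p$ is NP-complete for some polynomial $p$. A pseudo-polynomial transformation $f$ from $\Pi$ to $\Pi'$ preserves yes/no answers and comes equipped with: (i) a running time polynomial in the two quantities $\text{Length}(I)$ and $\text{Max}(I)$; (ii) a polynomial $q_1$ with $\text{Length}(I) \le q_1(\text{Length}'(f(I)))$; and (iii) a bivariate polynomial $q_2$ with $\text{Max}'(f(I)) \le q_2(\text{Max}(I), \text{Length}(I))$.

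The key step is to track how these three bounds compose over the restricted domain $\Pi_p$. On an instance with $\text{Max}(I) \le p(\text{Length}(I))$, condition (i) makes $f$ genuinely polynomial-time in $\text{Length}(I)$ alone, so in particular $\text{Length}'(f(I))$ is polynomially bounded in $\text{Length}(I)$. Substituting the bound on $\text{Max}(I)$ into (iii), and then using (ii) to re-express $\text{Length}(I)$ in terms of $\text{Length}'(f(I))$, gives
\[
\text{Max}'(f(I)) \le q_2\!\left(p(\text{Length}(I)), \text{Length}(I)\right) \le q_2\!\left(p(q_1(\text{Length}'(f(I)))), q_1(\text{Length}'(f(I)))\right),
\]
which is a single polynomial $p'$ evaluated at $\text{Length}'(f(I))$. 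Hence the restriction of $f$ to $\Pi_p$ is a polynomial-time transformation whose entire image lands inside the restricted family $\Pi'_{p'}$.

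Finally I would assemble the pieces: $f$ furnishes a polynomial-time many-one reduction from the NP-complete problem $\Pi_p$ to $\Pi'_{p'}$, so $\Pi'_{p'}$ is NP-hard, and by definition this is precisely the assertion that $\Pi'$ is NP-hard in the strong sense. The only real content — and thus the main obstacle — is the composition in the key step: one must check that nesting the length polynomial $q_1$ inside the magnitude polynomial $q_2$ (together with $p$) still yields a \emph{single} polynomial $p'$ controlling $\text{Max}'$ in terms of $\text{Length}'$, for it is this boundedness that places the image in a legitimate restricted family $\Pi'_{p'}$. Once $p'$ is exhibited, the reduction itself and the transfer of NP-hardness are routine.
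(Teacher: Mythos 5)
The paper does not prove this lemma at all---it is quoted verbatim from Garey and Johnson \cite{garey1979computers}---and your argument is precisely the standard proof given there: restrict to the polynomially-number-bounded subproblem $\Pi_p$ witnessing strong NP-completeness, observe that the pseudo-polynomial transformation becomes genuinely polynomial-time on that subdomain, and compose the bounds $q_2$, $p$, $q_1$ (taking all polynomials monotone nondecreasing, without loss of generality, so the substitutions in your chain of inequalities are valid) to exhibit a polynomial $p'$ with the image contained in $\Pi'_{p'}$. The proof is correct and complete.
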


The idea behind our proof is to take an instance of $3$-Partition and create a deformed curve which consists of a flat line whose discretization segments the line into $3m$ pieces such that the $i$-th piece has length $s(a_i)$, and create an ideal curve which consists of $m$ ``buckets'': length $m$ flat line segments discretized uniformly into unit length segements. The buckets will be separated by tall ``peaks'' which make it impossible for a deformed segment corresponding to a $3$-Partition element to be placed in multiple buckets without incurring some cost. See Figure~\ref{fig:hardness} for an example of this reduction.

\begin{figure}
  \centering
\includegraphics[scale=.75]{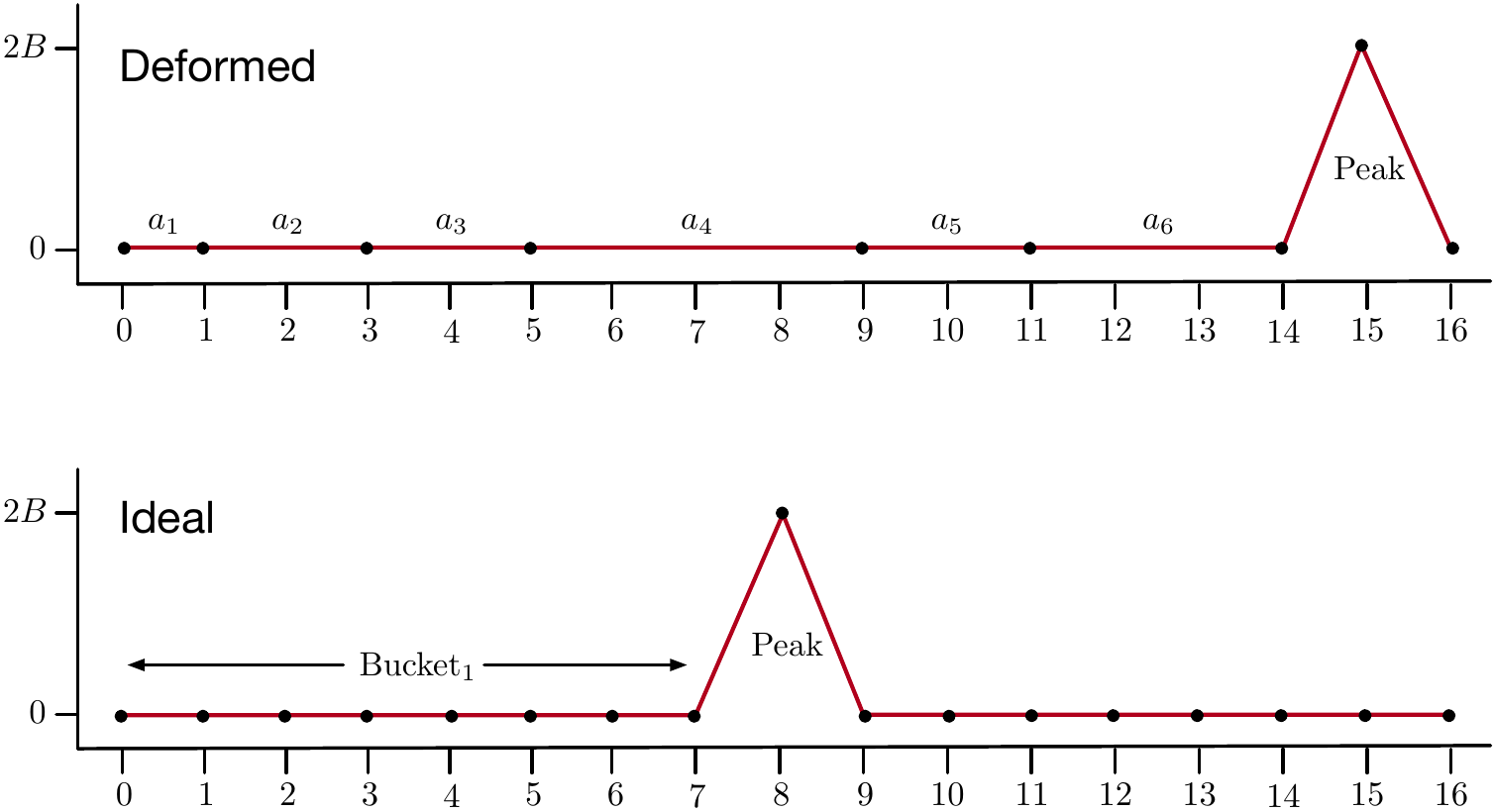}
\caption{The instance of (\ref{prob:cr}) generated from an instance of $3$-Partition with $m=2$, $a_1 = 1$, $a_2 = a_3 = a_5 = 2$, $a_4 =4$, $a_6=3$ by our reduction.  }\label{fig:hardness}
\end{figure}

\begin{theorem}\label{thm:CRPRhard}
  The (\ref{prob:cr}) with rearrangement is strongly NP-hard, even if we restrict to cost functions for which mapping a deformed segment onto an ideal segment incurs $0$ cost if and only if the deformed and ideal segments are equal up to translation and rotation of their endpoints.
\end{theorem}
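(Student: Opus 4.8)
The plan is to give a pseudo-polynomial transformation from 3-Partition and then invoke Lemma~\ref{lem:psedo_reduction}. Given a 3-Partition instance $(A,s,B,m)$ with integer weights, I would let the deformed curve $f$ be a single horizontal segment of length $\sum_i s(a_i)=mB$, discretized at the partial sums $0,\,s(a_1),\,s(a_1)+s(a_2),\dots,mB$, so that $P$ has exactly $3m-1$ interior points. Setting the number of cuts to $k=3m-1$ forces a cut at every interior point, producing $3m$ flat pieces whose $i$-th piece has length exactly $s(a_i)$. The ideal curve $g$ would consist of $m$ horizontal ``buckets'' of length $B$ at a common baseline, each discretized uniformly into $B$ unit segments, with consecutive buckets separated by a tall, thin triangular ``peak''. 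I would fix an uncoverage parameter $\delta>0$ (say $\delta=1$) and a cost function $c$ in the restricted class of the theorem: $c([a,b],\ell,r)=0$ exactly when $\restr{f}{[a,b]}$ equals $\restr{g}{[\ell,r]}$ up to translation and rotation, and $c=M$ otherwise for a large constant $M$.

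The first key step is a structural lemma about zero-cost placements. Since every deformed piece is a straight horizontal segment, $\restr{g}{[\ell,r]}$ equals it up to translation and rotation iff $\restr{g}{[\ell,r]}$ is itself a straight segment of the same length $s(a_i)$. Because the peaks are the only non-flat features of $g$, this happens iff $[\ell,r]$ lies inside a single bucket and spans exactly $s(a_i)$ consecutive unit segments. In particular no piece can cover any peak segment at zero cost, and choosing $M>\delta\cdot|Q|$ guarantees that no optimal solution ever spans a peak or makes any non-matching placement, since one such placement already exceeds the largest conceivable uncoverage savings $\delta\cdot|Q|$. Hence in every optimal solution each piece sits flat inside one bucket and every peak segment is left uncovered. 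Writing $C_0=\delta\cdot(\text{number of peak unit segments})$ for this unavoidable baseline, the whole problem reduces to how well the $3m$ pieces can tile the $m$ buckets.

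With this in hand the two directions are short. For the forward direction, a valid partition $A_1,\dots,A_m$ lets me place the three pieces of $A_i$ side by side inside bucket $i$; since they sum to $B$ they tile the bucket exactly, giving zero $c$-cost and leaving only the peak segments uncovered, for total cost exactly $C_0$. For the reverse direction, suppose the optimum equals $C_0$. Then the $c$-cost is zero, so all pieces lie inside buckets, and the uncoverage equals exactly the peak segments, so every bucket unit segment is covered. As the total piece length $mB$ equals the total bucket length $mB$, covering every bucket segment forces an overlap-free exact tiling, i.e.\ a partition of the pieces into the buckets with each bucket receiving total length $B$; the constraint $B/4<s(a)<B/2$ then forces exactly three pieces per bucket, recovering a 3-Partition solution. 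Taking the decision threshold $T=C_0$ and observing that any non-tiling (hence any no-instance) leaves at least one further bucket segment uncovered and therefore costs at least $C_0+\delta>T$ completes the equivalence. Since all constructed magnitudes ($B$, the $s(a_i)$, $|Q|=O(mB)$, and $M=O(\delta mB)$) are polynomial in the unary size of the 3-Partition instance, the transformation is pseudo-polynomial, and Lemma~\ref{lem:psedo_reduction} yields strong NP-hardness.

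The main obstacle I expect is making the structural lemma fully rigorous: I must argue geometrically that a flat piece achieves zero cost only on a within-bucket flat window of the exact right length, and that every alternative (spanning a peak, or placing a piece on a flat window of the wrong length) is strictly more expensive by a controllable margin. This is where the precise shape and height of the peaks and the exact definition of the restricted cost function $c$ matter, since I need ``equal up to translation and rotation'' to pin down both the length and the flatness of the covered region, and I need $M$ and $\delta$ calibrated so that trading uncoverage for a peak-spanning placement can never beat the tiling baseline $C_0$.
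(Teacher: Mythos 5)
Your reduction is correct in substance, but it takes a genuinely different route from the paper's at one key point: you leave the ideal curve's peaks uncovered and work with a nonzero decision threshold $T=C_0=\delta\cdot(\text{number of peak segments})$, whereas the paper appends $m-1$ matching peak gadgets (of height $2B$ and width $2$) to the \emph{deformed} curve itself, so that under rearrangement these can be mapped exactly onto the ideal peaks and a Yes-instance achieves optimum exactly $0$. The paper's zero-threshold version buys two things your version does not. First, it works for an \emph{arbitrary} cost function in the restricted class and arbitrary $\delta>0$, with no calibration: any nonzero cost anywhere already certifies a No-instance, so there is no need to introduce the large constant $M$ and compare it against $\delta|Q|$. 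Second, and more importantly, it is what powers Corollary~\ref{corollary:approximation}: distinguishing optimum $=0$ from optimum $>0$ rules out \emph{any} polynomial-time computable approximation factor, whereas your gap between $C_0$ and $C_0+\delta$ is only a bounded multiplicative ratio and would not yield that corollary. Your approach is otherwise sound --- the tiling argument (total piece length $mB$ equals total bucket length, so covering every bucket segment forces an overlap-free exact tiling, and $B/4<s(a)<B/2$ forces triples) matches the paper's reverse direction essentially verbatim. The one detail you rightly flag does need handling: a window $[\ell,r]$ consisting of exactly one straight flank of a peak is itself a line segment, so you must ensure its length $\sqrt{1+H^2}$ (for peak height $H$ and half-width $1$) never equals any integer $s(a_i)$; this is automatic since $1+H^2$ is never a perfect square for $H\geq 1$, and the paper sidesteps the issue the same way by taking $H=2B$.
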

\begin{proof}
  Consider an instance $I_1$ of $(A,s)$ of $3$-Partition.
  Let $\size{A} = 3m$ and let $B = \frac{1}{m}s(A)$.
  We may assume $m \geq 2$ otherwise $I_1$ is trivial.
  Note that from the definition of $3$-Partition $B$ is a positive integer.

  From $I_1$ we will construct an instance $I_2$ of (\ref{prob:cr}). For $i = 1, \dots, 3m$ let $S_i = \sum_{j=1}^i s(a_j)$ be the prefix sum of the sizes of the first $i$ elements of $A$. Let $S_0 = 0$. The deformed curve in our (\ref{prob:cr}) instance will be a piecewise linear interpolation of the discretization $P \subseteq \R^2$ determined by $3$ sets $P_1, P_2, P_3$ as follows:
  $$ P = P_1 \dot\cup P_2 \dot\cup P_3$$
  where
  $$P_1 = \{(S_i, 0) : i  = 0,1,\dots. 3m\},$$
  $$P_2 = \{(s_{3m} + 2i + 1, 2B): i = 0,\dots, m-2\}, $$
  $$P_3 = \{(s_{3m} + 2i, 0): i = 1,\dots, m-1\}.$$
  Intuitively $f$ is a piecewise linear function which is the constant $0$ function from $0$ to $s(A)$ followed by $m-1$ ``zigzags" between $2B$ and $0$ where each peak and dip are one unit apart. We call these objects peaks of height $2B$.

  The constant $0$ section of $f$ is discretized into $3m$ segments by $P$ where the $i$-th segment is of length $s(a_i)$. We call that segment the segment corresponding to $a_i$. The purpose of the peaks is to fit exactly onto the barriers in the ideal curve which we will describe now.

  The ideal curve $g$ is the piecewise linear interpolation of discretization $Q \subseteq \R^2$ determined by $2$ sets $Q_1, Q_2$ as follows:
  $$Q = Q_1 \dot\cup Q_2$$
  where
  $$ Q_1 = \Z \cap \bigcup_{i = 0}^{m-1} [(B+2)i, (B+2)i + B],$$
  $$ Q_2 = \{((B+2)i + B + 1, 2B): i = 0, \dots, m-2\}.$$
  Intuitively $Q_1$ is a discretization of $m$ constant functions of value $0$ of length $B$ such that they are discretized into $B$ unit length segments. We call each of these segments a \emph{bucket}. $Q_2$ is a set of $m-1$ peaks of height $2B$ which separate the $m$ flat buckets which form $Q_1$.

  Choose any $\delta > 0$ as the uncoverage parameter for $I_2$. Choose any cost function $c$ such that for segments any pair of segments $(a, b) \subseteq P \times Q$ the cost of assigning $a$ to $b$, $c(a,b)$, is $0$ if and only if $a$ and $b$ are equal up to translation and rotation of their endpoints. Choose a number of cuts $k \in \Z$ equal to $(3m+1) +2(m-1) -2 = |P|-2$.

  Observe that our construction can be done in pseudopolynomial time. In particular $P$ and $Q_2$ both consist of $O(m)$ points and $m = O(|A|)$. This is a polynomial number of points. The largest section is $Q_1$ which consists of $O(mB)$ points. Since $mB = s(A)$ this is a pseudopolynomial numbers of points in the size of $I_1$.

  Now we claim that $I_1$ is a Yes instance of $3$-Partition if and only if the optimal value of $I_2$ is $0$.

  First suppose that $I_1$ is a Yes instance. Then $I_1$ has a solution $A_1, \dots, A_m$. Since every point in $P$ is cut by our choice of $k$, we need to assign every line segment of $f$ to a position on $g$. For each $i \in [m]$, order the elements of $A_i$ arbitrarily. Consider the $i$-th bucket on $Q$, i.e. the intersection of $Q_1$ with the interval $[(B+2)i, (B+2)i + B]$. We can arrange the segments of $P$ corresponding to $A_i$ along this interval paying $0$ cost since $\sum_{a \in A_i}s(a) = B$, they are all flat line segments, and $Q_1$ has cut points at every integer point in the interval. To complete our $0$ cost solution to $I_2$, arrange the $m-1$ peaks formed by $P_2 \cup P_3$ onto the $m-1$ peaks formed by cutting $Q_2$. These peaks are of the same height and number so clearly they incur $0$ cost as they match exactly. Hence we have found a $0$ cost solution to $I_2$.

  Now on the other hand, if the optimal value of $I_2$ is $0$ then no part of $Q$ is left uncovered, and every segment of $P$ which is assigned to $Q$ matches exactly. Since the peaks of $Q$ have non-zero height, the optimal solution to $I_2$ will incur positive cost if one endpoint of a segment of $P$ corresponding to an element $a \in A$ is assigned to a point in one bucket and its other endpoint is assigned to a point in a different bucket. Further, by the height of a peak, if a segment corresponding to an element of $A$ is assigned to cover a peak of $Q$ then it will not match exactly and will incur a positive cost. Thus, since every peak of $Q$ is covered, every peak of $P$ must be assigned to a peak of $Q$ in an optimal solution. Since segments of $P$ which corresponding to elements of $A$ cannot cross buckets they are each assigned to a bucket of $Q$. By the lengths of the segments and buckets there can be no overlap, otherwise there will be some uncoverage (which would incur a positive cost). Hence the sum of the lengths of the segments of $P_1$ assigned to the $i$-th bucket of $Q$ is the length of the $i$-th bucket of $Q$, $B$. Construct a solution to $I_1$ by choosing $A_i$ to be the elements corresponding to segments of $P_1$ which are assigned to the $i$-th bucket of $Q$. Therefore $I_1$ is a Yes-instance, as desired.
\end{proof}

\paragraph{}
Our proof of Theorem~\ref{thm:CRPRhard} actually shows that it is strongly NP-complete to decide if an instance of (\ref{prob:cr}) has a $0$ cost solution. Thus we also obtain the following corollary about the hardness of approximation of (\ref{prob:cr}).

\begin{corollary}\label{corollary:approximation} There is no $\nu(|G|, |F|)$-approximation algorithm for the (\ref{prob:cr}) problem unless $\mathcal{P}=NP$, where $\nu$ is any function of $|G|, |F|$ that is computable in polynomial time.
\end{corollary}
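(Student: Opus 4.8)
The plan is to exploit the observation, stated in the remark immediately following Theorem~\ref{thm:CRPRhard}, that deciding whether an instance of (\ref{prob:cr}) admits a solution of cost $0$ is strongly NP-complete. This is the only ingredient needed: the reduction from $3$-Partition produces instances whose optimal value is exactly $0$ on Yes-instances and strictly positive on No-instances, so any procedure able to recognize value-$0$ instances in polynomial time would decide $3$-Partition, forcing $\mathcal{P}=NP$. The corollary is then just the standard ``a multiplicative factor cannot separate a zero optimum from a positive one'' argument.

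First I would argue by contradiction. Suppose $\mathcal{A}$ is a polynomial-time algorithm that, on every instance with ideal curve $g$ and deformed curve $f$, returns a feasible solution of cost at most $\nu(|G|,|F|)\cdot \OPT$, where $\nu$ is the claimed polynomially computable approximation factor and $\OPT$ denotes the optimal value. Since $\nu$ is computable in polynomial time it takes a finite value, and since the cost function $c$ is nonnegative every feasible solution has nonnegative cost.

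Next I would run $\mathcal{A}$ on an arbitrary instance and inspect the cost of the solution it returns. If $\OPT = 0$ then the returned cost is at most $\nu(|G|,|F|)\cdot 0 = 0$, and by nonnegativity it is exactly $0$; if $\OPT > 0$ then the returned cost is at least $\OPT > 0$. Hence the returned cost is $0$ if and only if $\OPT = 0$, so a single call to $\mathcal{A}$ together with one cost evaluation decides in polynomial time whether a given (\ref{prob:cr}) instance has a $0$-cost solution. Combining this with the NP-completeness of the value-$0$ decision problem yields $\mathcal{P}=NP$, the desired contradiction.

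I do not expect a genuine obstacle here, since the whole argument is routine; the only points to state carefully are that the approximation factor is finite (guaranteed by polynomial-time computability of $\nu$) so that $\nu\cdot 0 = 0$, and that costs are nonnegative so that a returned value bounded above by $0$ must equal $0$. Both hold by construction, so the main work is simply invoking Theorem~\ref{thm:CRPRhard} in its sharper ``$0$-cost decision'' form.
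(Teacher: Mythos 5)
Your proposal is correct and follows exactly the route the paper intends: the paper offers no separate proof of the corollary beyond the remark that Theorem~\ref{thm:CRPRhard} shows deciding whether a (\ref{prob:cr}) instance has a $0$-cost solution is strongly NP-complete, and your standard gap argument (a $\nu$-approximation returns cost $0$ precisely when $\OPT=0$, since $c$ maps into $\R_+\cup\{\infty\}$ and $\delta\geq 0$) is the intended completion. No differences worth noting.
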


\section{Conclusion}
\paragraph{}
The work described in this paper is part of a larger push to improve craniofacial surgical methods through applied mathematics and engineering. The algorithm from Section \ref{sec:dp} is integrated into a pre-operative planning tool, allowing surgeons to pre-plan cut locations. The output of the algorithm and the planning tool is designed to interface with a projection system that shows the cut locations directly on the patient in the operating room.  A prototype system is nearly complete at this stage.

In this paper we presented a formal optimization model, and an algorithm for the craniofacial surgical problem of reshaping the front-orbital bar. We demonstrated its application to several test cases. Our work on the 2D setting continues, and we are currently pursuing extensive clinical testing, and in particular a postoperative, and comparative evaluation of the quality of our solutions.

Future work will focus on 3D, anterior skull craniosynostosis cases,
where surgical incisions and optimization problems are much more
complex.

\bibliographystyle{splncs04}
\bibliography{sickkids}

\end{document}